\def\<{\langle}
\def\>{\rangle}
\def\i{\underline i}
\def\a{\underline a}
\def\c{\underline c}
\def\b{\underline b}
\def\x{\underline x}
\def\z{\underline z}
\def\w{\underline w}
\def\v{\underline v}
\newcommand{\be}{\begin{equation}}
\newcommand{\ee}{\end{equation}}
      \newtheorem{theorem}{Theorem}[section]
       \newtheorem{proposition}[theorem]{Proposition}
       \newtheorem{remark}{Remark}[section]
\theoremstyle{definition}
\title{An eigenproblem approach  to \\optimal equal-precision sample allocation in subpopulations}
\author{Jacek Weso\l owski\thanks{Politechnika Warszawska and G\l \'owny Urz\k{a}d Statystyczny, Warszawa, Poland, e-mail address: wesolo@mini.pw.edu.pl},\hspace{2mm} Robert Wieczorkowski\thanks{G\l \'owny Urz\k{a}d Statystyczny, Warszawa, Poland, e-mail address: R.Wieczorkowski@stat.gov.pl} }
\begin{document}
\maketitle
\begin{abstract}
Allocation of samples in stratified and/or multistage sampling is one of the central issues of sampling theory.
In a survey of a population often the constraints for precision of estimators of subpopulations parameters have to be taken care of during the allocation of the sample. Such issues are often solved with mathematical programming procedures. In many situations it is desirable to allocate the sample, in a way which forces the precision of estimates at the subpopulations level to be both: optimal and identical, while the constraints of the total (expected) size of the sample (or samples, in two-stage sampling) are imposed. Here our main concern is related to two-stage sampling schemes. We show that such problem in a wide class of sampling plans has an elegant mathematical and computational solution. This is done due to a suitable definition of the optimization problem, which enables to solve it through a linear algebra setting involving eigenvalues and eigenvectors of matrices defined in terms of some population quantities. As a final result we obtain a very simple and relatively universal method for calculating the subpopulation optimal and equal-precision allocation which is based on one of the most standard algorithms of linear algebra (available e.g. in {\em R} software). Theoretical solutions are illustrated through a numerical example based on the Labour Force Survey. Finally, we would like to stress that the method we describe, allows to accommodate quite automatically for different levels of precision priority for subpopulations.
\end{abstract}
\section{Introduction}
Consider a population $U$ partitioned into subpopulations $U_1,\ldots,U_J$, that is $U=\bigcup_{j=1}^J\,U_i$ and $U_i\cap U_j=\emptyset$ for $i\ne j$, $i,j=1,\ldots,J$. Assume that we are interested in estimation of means of a variable $\mathcal{Y}$ in all subpopulations. In each $U_i$ a sample of $n_i$ elements, $i=1,\ldots,J$, is chosen according to simple random sampling without replacement (SRSWOR). Assume additionally that the size of the total sample $n=n_1+\ldots+n_J$ is fixed. A natural requirement is to allocate the sample among subpopulations in such a way that the precision (here and throughout the paper understood as coefficients of variation, CV's) of the estimators in each of the subpopulations are the same. Throughout this paper by equal-precision we always mean equal-precision in subpopulations. That is we want to have
$$
T_j=\left(\tfrac{1}{n_j}-\tfrac{1}{N_j}\right)\gamma_j^2=\mathrm{const}=:T\qquad \mbox{for all }\;j=1,\ldots,J,
$$
where $\gamma_j$ is the CV of $\mathcal{Y}$ in $U_j$ and $N_j=\#\,U_j$, $j=1,\ldots,J$.
Expressing $n_j$ in terms of $T_j$, $j=1,\ldots,J$, the constraint on the size of the total sample $n=n_1+\ldots+n_J$ gives the equation
$$
n=\sum_{j=1}^J\,\tfrac{N_j\gamma_j^2}{\gamma_j^2+TN_j}
$$
with unknown $T$. It is easy to see that the above equation has a unique solution, which can be easily computed numerically (however no analytical explicit formula is available). Obviously, such a solution, $T^*$, gives the optimal allocation $n_j=\tfrac{N_j\gamma_j^2}{\gamma_j^2+T^*N_j}$, $j=1,\ldots,J$ -  for more details see, e.g. Lednicki and Weso\l owski (1994).

On the other hand if one imposes requirements on CV's of estimators in subpopulations, that is when $T_j$, $j=1,\ldots,J$, are given (not necessarily identical) there is no freedom in the sense that they determine uniquely the total sample size. If instead one assumes only the restriction that CV's of domain mean estimators are bounded from above by (possibly) different constraints $T_j$, $j=1,\ldots,J$, the minimization of the total sample size is a valid question. It has been solved recently (with additional constraint on the CV of the estimator of the population mean) for stratified SRSWOR by Choudhry, Rao and Hidiroglou (2012) through a nonlinear programming (Newton-Raphson) procedure. These authors followed earlier application of such procedures to optimal allocation of the sample among strata for the population means estimation in multivariate setting as proposed in Huddlestone, Claypool and Hocking (1970) and Bethel (1989) (see also Ch. 12.7 in S\"arndal, Swensson and Wretman (1992)).  For an alternative numerical method (Nelder-Mead simplex method) used to sample allocation (and strata construction) under multivariate setting, where subpopulations were also taken under account -  see Lednicki and Wieczorkowski (2003).

In an allocation problem for stratified two-stage sampling, when only the optimality of the estimator for the population as a whole is considered, traditionally a single constraint based on the expected total cost is imposed - see e.g. Ch. 2.8 in S\"arndal, Swensson and Wretman (1992) or Ch. 10.9 and Ch. 10.10 in Cochran (1977). Such issues were also considered more recently - see e.g. Clark and Steel (2000), Khan et al (2006) and Clark (2009) and references therein. From the practical point of view, the total cost of the two-stage survey may be difficult to model, therefore alternatively, constraints in terms of (expected) total sizes of SSU's (secondary sampling units) and PSU's (primary sampling units) may be imposed. Under such constraints, we are interested in the allocation of PSU's and SSU's which guarantees {\em optimal and equally precise estimators of means in all the domains}. We allow stratification on both stages and propose quite general approach to the problem which is valid not only for simple random sampling without replacement in strata. Suitable definition of the minimization issue allows to reduce it to an eigenproblem for a rank-two perturbation of a diagonal matrix. Such an approach, in the context of allocation of samples, was for the first time proposed in Niemiro and Weso\l owski (2001) (denoted NW in the sequel), where stratification was allowed only either at the first or at the second stage with SRSWOR on both stages only. Moreover, some technical conditions were required in that paper, which allowed to use the famous Perron-Frobenius theorem in the proof of the main result. Such  approach was applied in agricultural surveys in Kozak (2004) and in a forestry survey in Kozak and Zieli\'nski (2005). Also it has been slightly developed theoretically by allowing  CV's in domains to be of the form $\kappa_j T$, $j=1,\ldots,J$, with known coefficients (levels of priority) $\kappa_j$, $j=1,\ldots,J$, and unknown optimal $T$, in Kozak, Zieli\'nski and Singh (2008).

The eigenvalue-eigenvector solution we present here is quite universal. In particular, it covers also classical Neyman optimal allocation as a boundary case of $J=1$, that is the case of one subpopulation (see, e.g. S\"arndal, Swensson, Wretman (1992), Ch. 3.7.3), or related solution in the two- stage sampling (see e.g. S\"arndal, Swensson, Wretman (1992), Ch. 12.8.1). It includes also equal-precision allocation derived in Lednicki and Weso\l owski (1994). Though these issues are rather standard and well-understood, as far as we know, they have never been embedded into and eigenproblem setting. Nevertheless, our main concern here is two-stage sampling schemes. The point of departure for the present paper is that of NW, where optimal equal-precision allocation was considered in two special cases of two-stage sampling: (1) stratified SRSWOR at the first stage and SRSWOR at the second, (2) SRSWOR at the first stage and stratified SRSWOR at the second. In that paper eigenproblem approach together with the Perron-Frobenius theory of positive matrices were used. Here we develop a similar approach (though we go beyond the Perron-Frobenius theory) to a wider class of two-stage sampling schemes and with less restrictive requirements for the population characteristics. It is given in Section 3, where an eigenvalue-eigenvector solution of a general minimization problem leads to optimal and equal precision estimators in subpopulations for some stratified sampling plans. Section 2 is a kind of a warm-up: the proposed method is introduced in rather standard settings of single-stage sampling. A numerical example comparing the eigenvalue-eigenvector allocation with the standard one in the Polish Labour Force Survey is discussed in Section 4. Conclusions, involving incorporation of different levels of priority in the proposed method, are given in Section 5.

\section{Equal-precision optimal allocation in single-stage sampling}
In this section we consider single-stage stratified simple random sampling. The main purpose of this section is to give a friendly introduction to the approach via eigenvalues and eigenvectors, since in this case the proofs are less complicated than in the case of two-stage sampling.  It also confirms relative universality of such purely linear-algebraic solution to the allocation problem, when the constraint of equal-precision is imposed. Nevertheless, in a single-stage setting one can use alternatively a direct numerical method as described in Remarks \ref{LW} or a combination of the direct numerical approach and the Neyman optimal allocation method as explained in \ref{stra} below. We would like to emphasize that such approaches are not possible in the two-stage setting considered in Section 3.

We start with a general minimization problem, which, we show, can be treated through linear algebra methods.
\subsection{Minimization problem - generalities}
Consider strictly positive numbers: $c_j$, $A_{j,h}$, $h=1,\ldots,H_j$, $j=1,\ldots,d$ and $x$.
Denote \be\label{ac}\a=(a_j,\,j=1,\ldots,d)=\tfrac{1}{\sqrt{x}}\,\left(\sum_{h=1}^{H_j}\,\sqrt{A_{j,h}},\,j=1,\ldots,d\right),\qquad \c=(c_j,\,j=1,\ldots,d).\ee Let
\be\label{matac}
D=\a\a^T-\mathrm{diag}(\c).
\ee

\begin{theorem}\label{ss}
Assume that $D$ as defined in \eqref{matac}, has the unique simple positive eigenvalue $\lambda$ and let $\v=(v_1,\ldots,v_d)$ be a respective eigenvector. Then the problem of minimization of
\be\label{tmin2}
T=\sum_{h=1}^{H_j}\,\tfrac{A_{j,h}}{x_{j,h}}-c_j,\quad j=1,\ldots,d,
\ee
where $x_{j,h}>0$, $h=1,\ldots,H_j$, $j=1,\ldots,d$, under the constraint
\be\label{wiez21}
\sum_{j=1}^d\,\sum_{h=1}^{H_j}\,x_{j,h}=x,
\ee
where $x$ is a given positive number, has the solution
$$
x_{j,h}=x\,\tfrac{v_j\sqrt{A_{j,h}}}{\sum_{k=1}^d\,v_k\,\sum_{g=1}^{H_k}\,\sqrt{A_{k,g}}}.
$$

Moreover, $T=\lambda$, the unique positive eigenvalue of matrix $D$.
\end{theorem}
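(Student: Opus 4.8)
The plan is to recognize that the equal-precision constraint forces all the quantities $T_j$ in \eqref{tmin2} to share a common value $T$, and then to convert the problem into a fixed-point/eigenvalue identity. First I would introduce, for each $j$, the partial sums $s_j := \sum_{h=1}^{H_j} x_{j,h}$, so the global constraint \eqref{wiez21} reads $\sum_{j=1}^d s_j = x$. For fixed $s_j$, minimizing (or rather, making consistent) the expression $\sum_{h=1}^{H_j} A_{j,h}/x_{j,h}$ subject to $\sum_h x_{j,h}=s_j$ is the classical Cauchy--Schwarz allocation: the optimum is $x_{j,h} = s_j \sqrt{A_{j,h}} \big/ \sum_{g=1}^{H_j}\sqrt{A_{j,g}}$, giving $\sum_{h=1}^{H_j} A_{j,h}/x_{j,h} = \bigl(\sum_{h=1}^{H_j}\sqrt{A_{j,h}}\bigr)^2 / s_j = x\,a_j^2/s_j$ in the notation of \eqref{ac}. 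Hence the within-block structure collapses, and the problem reduces to choosing $s_1,\ldots,s_d>0$ with $\sum_j s_j = x$ so that $x\,a_j^2/s_j - c_j = T$ for all $j$, with $T$ as small as possible.

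Next I would solve this reduced system. From $x a_j^2/s_j - c_j = T$ we get $s_j = x a_j^2/(T+c_j)$, and substituting into $\sum_j s_j = x$ yields the scalar equation $\sum_{j=1}^d a_j^2/(T+c_j) = 1$. The key move is to read this as an eigenvalue condition: if we set $v_j := a_j/(T+c_j)$ (up to scaling), then $a_j = (T+c_j)v_j$, and one checks directly that $D\v = \a\a^T\v - \mathrm{diag}(\c)\v$ has $j$-th entry $a_j\sum_k a_k v_k - c_j v_j$; using $\sum_k a_k v_k = \sum_k a_k^2/(T+c_k) = 1$ and $a_j = (T+c_j)v_j$ this equals $a_j - c_j v_j = (T+c_j)v_j - c_j v_j = T v_j$. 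So $T$ is an eigenvalue of $D$ with eigenvector $\v$. Conversely, $D\v=\lambda\v$ with $\lambda > -\min_j c_j$ forces $v_j = a_j(\sum_k a_k v_k)/(\lambda+c_j)$, and plugging back gives $\sum_j a_j^2/(\lambda+c_j) = 1$, i.e. $\lambda$ satisfies the same scalar equation; since $g(t) := \sum_j a_j^2/(t+c_j)$ is strictly decreasing from $+\infty$ to $0$ on $(-\min_j c_j,\infty)$, there is exactly one root there, and by hypothesis $D$ has a unique simple positive eigenvalue $\lambda$, which must be this root. Therefore the minimal (indeed, the only) equal-precision value is $T = \lambda$, and back-substituting $s_j = x a_j^2/(\lambda+c_j) = x a_j v_j / \sum_k a_k v_k$ into the Cauchy--Schwarz formula for $x_{j,h}$, together with $a_j = \tfrac{1}{\sqrt x}\sum_h \sqrt{A_{j,h}}$, gives precisely the claimed expression $x_{j,h} = x\, v_j\sqrt{A_{j,h}} \big/ \sum_k v_k \sum_g \sqrt{A_{k,g}}$.

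The step that needs the most care is the logical status of the word ``minimization'': the equal-precision requirement $T_1 = \cdots = T_d$ is built into \eqref{tmin2}, so in fact the feasible common value $T$ is uniquely pinned down by the constraint once the within-block Cauchy--Schwarz optimum is used, and ``minimizing $T$'' really amounts to showing this single admissible value is attained and that no choice of $x_{j,h}$ can do better than the Cauchy--Schwarz allocation block by block. I would make this precise by arguing that any feasible allocation has $\sum_h A_{j,h}/x_{j,h} \ge x a_j^2/s_j$ with equality iff $x_{j,h}\propto \sqrt{A_{j,h}}$, so if the common value $T$ is to be as small as possible we must have equality in every block; the rest is then the scalar analysis above. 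A secondary point to handle cleanly is the sign/uniqueness bookkeeping — identifying the relevant root of $g$ as lying in $(-\min_j c_j,\infty)$ and matching it with the hypothesized unique simple positive eigenvalue of $D$ — but this is exactly where the assumption in the theorem statement is used, so no extra work beyond invoking it is required.
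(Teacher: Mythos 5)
Your proof is correct, and it takes a genuinely different route from the paper's. The paper sets up the Lagrange function $F(T,\x)=T+\sum_j\mu_j\bigl(\sum_h A_{j,h}/x_{j,h}-c_j-T\bigr)+\mu\sum_{j,h}x_{j,h}$, reads off $x_{j,h}=\sqrt{\mu_j/\mu}\,\sqrt{A_{j,h}}$ from stationarity, and then observes that the equal-precision constraints turn into $D\v=T\v$ with $v_j=\sqrt{\mu_j/\mu}$; the sign structure of $\v$ is then obtained by applying the Perron--Frobenius theorem to $D+\alpha I$ for $\alpha>\max_j c_j$. You instead eliminate the within-block variables by the Cauchy--Schwarz bound $\sum_h A_{j,h}/x_{j,h}\ge\bigl(\sum_h\sqrt{A_{j,h}}\bigr)^2/s_j$, reduce to the block sums $s_j$, and arrive at the secular equation $\sum_j a_j^2/(T+c_j)=1$ of the rank-one update, which you then translate back into $D\v=T\v$ with the explicit eigenvector $v_j=a_j/(T+c_j)$. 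Your version buys two things the paper's argument does not deliver: a genuine proof of global minimality (your chain $s_j\ge xa_j^2/(T'+c_j)$, valid since $T'+c_j>0$ automatically, sums to $g(T')\le 1$ and hence $T'\ge T^*$ by monotonicity of $g$, whereas the Lagrange computation only locates stationary points and never checks they minimize), and a Perron--Frobenius-free proof that the relevant eigenvector has entries of one sign, since $a_j/(\lambda+c_j)>0$ is manifest. The identification of the secular root with the hypothesized positive eigenvalue is handled correctly by your converse direction (any eigenvalue exceeding $-\min_j c_j$, in particular the positive one, must solve $g=1$, and $g=1$ has only one such root). What the paper's heavier machinery buys in exchange is portability: the same Lagrangian template transfers verbatim to the rank-two perturbation $D=\a\a^T+\b\b^T-\mathrm{diag}(\c)$ of Theorem 3.1, where your explicit secular-equation eigenvector no longer has a one-line closed form and the Perron--Frobenius step again becomes the natural way to control signs.
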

\begin{proof}
For $\x=(x_{j,h},\,h=1,\ldots,H_j,\;j=1,\ldots,d)$ consider the Lagrange function
$$
F(T,\,\x)=T+\sum_{j=1}^d\,\mu_j\left(\sum_{h=1}^{H_j}\,\tfrac{A_{j,h}}{x_{j,h}}-c_j-T\right)+\mu\,\sum_{j=1}^d\,\sum_{h=1}^{H_j}\,x_{j,h}.
$$

Differentiate with respect to $x_{j,h}$ to get equations for stationary points
$$
\tfrac{\partial\,F}{\partial\,x_{j,h}}=\mu-\mu_j\tfrac{A_{j,h}}{x_{j,h}^2}=0.
$$
Therefore $\mu/\mu_j>0$ and
$$
x_{j,h}=\sqrt{\tfrac{\mu_j}{\mu}}\,\sqrt{A_{j,h}},\qquad h=1,\ldots,H_j,\;j=1,\ldots,d.
$$
Plugging it to \eqref{wiez21} we obtain
\be\label{xsum}
x=\sum_{j=1}^d\,v_j\,\sum_{h=1}^{H_j}\,\sqrt{A_{j,h}},
\ee
where $v_j=\sqrt{\mu_j/\mu}$, $j=1,\ldots,d$. Now the constraint \eqref{tmin2} gives
$$\sum_{h=1}^{H_j}\,\sqrt{A_{j,h}}-c_jv_j=Tv_j,\qquad j=1,\ldots,d.$$
By \eqref{xsum} it can be written as
$$
\tfrac{1}{x}\left(\sum_{k=1}^d\,v_k\sum_{g=1}^{H_k}\,\sqrt{A_{k,g}}\right)\,\sum_{h=1}^{H_j}\,\sqrt{A_{j,h}}-c_jv_j=Tv_j,\qquad j=1,\ldots,d.
$$
Alternatively, it can be written as
$$
D\,\v=T\,\v,
$$
where the matrix $D$ is defined in \eqref{matac}. That is $0<T=\lambda$ is the unique positive eigenvalue and $\v$ is the eigenvector related to $T$.

To show that the eigenvector $\v$ attached to the eigenvalue $\lambda$ has all coordinates of the same sign we use the celebrated Perron-Frobenius theorem: {\em If $A$ is a matrix with all strictly positive entries then there exists a simple positive eigenvalue $\mu$ such that $\mu\ge |\nu|$ for any other eigenvalue $\nu$ of $A$. The respective eigenvector (attached to $\mu$) has all entries strictly positive (up to scalar multiplication)}  - see e.g. Kato (1981), Th. 7.3 in Ch. 1.

Fix a number $\alpha>\max_{1\le j\le J}\,c_j>0$. The matrix $D+\alpha I$, where $I$ is the identity matrix, has all entries strictly positive. For any eigenvalue $\delta_j$ of $D$ and respective eigenvector $\w_j$
$$
(D+\alpha I)\w_j=(\delta_j+\alpha)\w_j,\qquad j=1,\ldots,d.
$$
That is $\delta_j+\alpha$, and $\w_j$, $j=1,\ldots,d$, are respective eigenvalues and eigenvectors of the matrix $D+\alpha I$. By the Perron-Frobenius theorem, there exists $j_0$ such that $\delta_{j_0}+\alpha\ge |\delta_j+\alpha|$ for any $j$ and respective eigenvector $\w_{j_0}$ has all entries of the same sign. Consequently, $\delta_{j_0}+\alpha\ge \delta_j+\alpha$, and thus $\delta_{j_0}\ge \delta_j$ for any $j$. Therefore, by assumption that $\lambda$ is the unique simple positive eigenvalue of $D$ it follows that $T=\lambda=\delta_{j_0}$ and the respective eigenvector $\v=\w_{j_0}$ has all entries of the same sign.
\end{proof}

\begin{proposition}\label{linalg}
Let $\a,\,\c\in(0,\infty)^d$ be such that
\be\label{zal}
\sum_{i=1}^d\,\tfrac{a_i^2}{c_i}>1.
\ee

Then the matrix $D$ defined in \eqref{matac}
has a unique simple positive eigenvalue $\lambda$.
\end{proposition}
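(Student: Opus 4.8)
The plan is to use that $D=\a\a^T-\mathrm{diag}(\c)$ is a symmetric rank-one perturbation of a diagonal matrix, so that its spectrum is governed by a scalar secular equation. Since $D$ is real symmetric, all its eigenvalues are real and, by the spectral theorem, for each eigenvalue the algebraic and geometric multiplicities coincide; thus it suffices to exhibit exactly one positive real $\lambda$ with $\det(D-\lambda I)=0$ and to check that its eigenspace is one-dimensional.

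First I would compute the characteristic polynomial away from the poles. For $\lambda\notin\{-c_1,\ldots,-c_d\}$ the matrix $M_\lambda:=\mathrm{diag}(\c)+\lambda I$ is invertible, and since $D-\lambda I=\a\a^T-M_\lambda$, the matrix determinant lemma ($\det(B+uv^T)=\det(B)(1+v^TB^{-1}u)$ for invertible $B$) yields
\be
\det(D-\lambda I)=(-1)^d\det(M_\lambda)\,\Big(1-\a^TM_\lambda^{-1}\a\Big).
\ee
As $\det(M_\lambda)=\prod_{i=1}^d(c_i+\lambda)$ and $\a^TM_\lambda^{-1}\a=\sum_{i=1}^d\tfrac{a_i^2}{c_i+\lambda}$, this equals $(-1)^d\big(\prod_{i=1}^d(c_i+\lambda)\big)f(\lambda)$ with $f(\lambda):=1-\sum_{i=1}^d\tfrac{a_i^2}{c_i+\lambda}$. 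Restricting to $\lambda>0$ — where $\lambda\ne-c_i$ automatically because $c_i>0$, and $\prod_{i=1}^d(c_i+\lambda)>0$ — a positive $\lambda$ is therefore an eigenvalue of $D$ if and only if $f(\lambda)=0$.

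Next I would study $f$ on $(0,\infty)$: each term $\tfrac{a_i^2}{c_i+\lambda}$ is strictly decreasing, so $f$ is continuous and strictly increasing there, with $f(\lambda)\to1$ as $\lambda\to\infty$ and $f(0)=1-\sum_{i=1}^d\tfrac{a_i^2}{c_i}<0$ by the hypothesis \eqref{zal}. Hence $f$ has exactly one zero $\lambda$ in $(0,\infty)$, which is thus the unique positive eigenvalue of $D$. For simplicity, suppose $D\v=\lambda\v$; rewriting gives $(\a^T\v)\,\a=M_\lambda\v$, so $\v=(\a^T\v)\,M_\lambda^{-1}\a$ since $M_\lambda$ is invertible. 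If $\a^T\v=0$ then $\v=0$, impossible for an eigenvector, so $\a^T\v\ne0$ and $\v$ is a nonzero multiple of $M_\lambda^{-1}\a$. The eigenspace is one-dimensional, and by the spectral theorem $\lambda$ is a simple eigenvalue.

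I do not anticipate a real obstacle; the two places that merely require care are coincidences among the $c_i$ (harmless, because the determinant identity only needs $\lambda\notin\{-c_i\}$, which holds for every $\lambda>0$) and the worry that a positive eigenvalue could be hidden at a pole of $f$ (ruled out since the poles $-c_i$ are negative). The substance of the argument is the one-variable monotonicity of $f$ together with assumption \eqref{zal}, which is exactly what makes $f(0)<0$ and hence forces a single sign change on $(0,\infty)$.
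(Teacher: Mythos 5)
Your proof is correct, and it takes a genuinely different route from the paper's. The paper argues via the Weyl inequality applied to $D=\a\a^T-\mathrm{diag}(\c)$ (rank-one perturbation, so $\lambda_{d-1}(D)<0$) combined with an explicit expansion of $\det D$, whose sign under \eqref{zal} then forces $\lambda_d(D)>0$. You instead derive the secular equation: by the matrix determinant lemma, the positive eigenvalues of $D$ are exactly the zeros in $(0,\infty)$ of $f(\lambda)=1-\sum_i a_i^2/(c_i+\lambda)$, and strict monotonicity of $f$ together with $f(0)<0$ (which is precisely \eqref{zal}) and $f(\infty)=1$ gives exactly one such zero. Both arguments hinge on the same quantity $1-\sum_i a_i^2/c_i$, but yours avoids the Weyl machinery and the rank-one determinant expansion, and it yields two bonuses: the explicit eigenvector $\v\propto M_\lambda^{-1}\a=\bigl(a_i/(c_i+\lambda)\bigr)_i$, which is manifestly positive in all coordinates --- a fact the paper must establish separately via the Perron--Frobenius theorem in the proof of Theorem \ref{ss} --- and a direct proof of simplicity from the one-dimensionality of the eigenspace. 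The paper's approach, on the other hand, generalizes more readily to the rank-two perturbation $\a\a^T+\b\b^T-\mathrm{diag}(\c)$ of Proposition \ref{linal}, where the secular equation becomes a $2\times 2$ determinant condition and the single-crossing argument is less immediate.
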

\begin{proof}
For any $d\times d$ Hermitian matrix $M$ denote by $\lambda_i(M)$, $i=1,\ldots,d$, non-decreasingly ordered eigenvalues of $M$.
Recall the Weyl inequalities (see e.g. Th. 4.3.1 in Horn and Johnson (1985)): {\em Let $A$ and $B$ be Hermitian $d\times d$ matrices. Then
\be\label{weyl1}
\lambda_k(A+B)\le \lambda_k(A)+\lambda_d(B)\qquad \forall\,k=1,\ldots,d.
\ee}

Note first that $D$, as rank one perturbation of diagonal matrix, is non-singular and thus all eigenvalues of $D$ are non-zero.
Since $\a\a^T$ is non-negative definite of rank 1, we have $\lambda_j(\a\a^T)=0$ for $j=1,\ldots,d-1$. Thus \eqref{weyl1} with $A=\a\a^T$ and $B=-\mathrm{diag}(\c)$ for $k=d-1$ implies $\lambda_{d-1}(D)< 0$ since all eigenvalues of $B$ are negative and all eigenvalues of $D$ are non-zero.

Therefore,
\be\label{detd}
\mathrm{sgn}\,\det\,D=(-1)^{d-1}\mathrm{sgn}(\lambda_d(D)).
\ee

On the other hand expanding determinant of $D$
$$
\det\,D=\det\,\left[\begin{array}{cccc}
a_1^2-c_1 & a_1a_2 & \ldots & a_1a_d \\
a_1a_2 & a_2^2-c_2 & \ldots & a_2a_d \\
\ldots & \ldots & \ldots & \ldots \\
a_1a_d & a_2a_d & \ldots & a_d^2-c_d
\end{array} \right]
$$
and using the fact that $\a\a^T$ is of rank one we obtain
$$
\det\,D=(-1)^{d-1}\,\sum_{i=1}^d\,a_i^2\prod_{\substack{k=1\\k\ne i}}^d\,c_k+(-1)^d\prod_{k=1}^d\,c_k=(-1)^d\prod_{k=1}^d\,c_k\,\left(1-\sum_{i=1}^d\,\tfrac{a_i^2}{c_i}\right).
$$

Comparing the above formula with \eqref{detd} we have
$$
\mathrm{sgn}(\lambda_d(D))=-\mathrm{sgn}\left(1-\sum_{i=1}^d\,\tfrac{a_i^2}{c_i}\right).
$$
Due to \eqref{warun} we get $\lambda:=\lambda_d(D)>0$.
\end{proof}

\subsection{Application to stratified simple random sampling}
The population $U=\{1,\ldots,N\}$ consists of subpopulations $U_1,\ldots,U_J$, that is $U=\bigcup_{j=1}^J\,U_j$ and $U_i\cap U_j=\emptyset$ whenever $i\ne j$, $i,j=1,\ldots,J$. Consider a non-negative variable $\mathcal{Y}$ in this population, that is let $y_k=\mathcal{Y}(k)$, $k\in U$. We want to estimate the total value of $\mathcal{Y}$ in each of subpopulations, that is we are interested in $t_j=\sum_{k\in U_j}\,y_k$, $j=1,\ldots,J$. We use stratified simple random sampling without replacement (SSRSWOR) in each subpopulation. That is $U_j=\bigcup_{h=1}^{H_j}\,U_{j,h}$, $U_{j,h}\cap U_{j,g}=\emptyset$ for any $h\ne g$, $g,h=1,\ldots,H_j$, where $H_j$ is the number of strata in $U_j$, $j=1,\ldots,J$. Thus the standard estimator has the form
$$\hat{t}_j=\sum_{h=1}^{H_j}\,\tfrac{N_{j,h}}{n_{j,h}}\,\sum_{k\in s_{j,h}}\,y_k,$$
where $N_{j,h}=\#(U_{j,h})$, $n_{j,h}=\#(s_{j,h})$ and $s_{j,h}$ denotes the sample chosen from $U_{j,h}$, $h=1,\ldots,H_j$, $j=1,\ldots,J$.

Recall that its variance is
$$
D^2(\hat{t}_j)=\sum_{h=1}^{H_j}\,N_{j,h}^2\left(\tfrac{1}{n_{j,h}}-\tfrac{1}{N_{j,h}}\right)\,S_{j,h}^2,
$$
where $S_{j,h}^2$ is the population variance in $U_{j,h}$, that is $S_{j,h}^2=\tfrac{1}{N_{j,h}-1}\sum_{k\in U_{j,h}}\,(y_k-\bar{y}_{U_{j,h}})^2$ and $\bar{y}_{U_{j,h}}=\tfrac{1}{N_{j,h}}\sum_{k\in U_{j,h}}\,y_k$.

The problem we study is to allocate the sample of size $n$ among subpopulations and strata in such a way that precision (expressed through CV) of the estimation is the same and the best possible in all subpopulations. That is we would like to find the two-way array $(n_{j,h})_{\substack{h=1,\ldots,H_j\\ j=1,\ldots,J}}$ such that
\be\label{size}
\sum_{j=1}^J\,\sum_{h=1}^{H_j}\,n_{j,h}=n
\ee
and
\be\label{fix}
\tfrac{1}{t_j^2}\sum_{h=1}^{H_j}\,\tfrac{N_{j,h}^2S_{j,h}^2}{n_{j,h}}-\tfrac{1}{t_j^2}\sum_{h=1}^{H_j}\,N_{j,h}S_{j,h}^2=T,\qquad j=1,\ldots,J,
\ee
with minimal possible $T$, which, actually, is the square of the CV. Then the double array $(n_{j,h})$ is called the optimal equal-precision allocation in strata.

Define
$$
A_{j,h}:=\tfrac{N_{j,h}^2S_{j,h}^2}{t_j^2},\qquad c_j:=\sum_{h=1}^{H_j}\,\tfrac{N_{j,h}S_{j,h}^2}{t_j^2},\qquad x_{j,h}:=n_{j,h},\qquad x:=n.
$$

For such $A_{j,h}$ and $c_j$ define $\a$ and $\c$ as in \eqref{ac} and note that $d=J$ in the present setting. Let $D$ be as defined in \eqref{matac} for such $\a$ and $\c$. Then directly from Theorem \ref{ss} we obtain an allocation which ensures both optimal and the same CV's (precisions) of estimators of means  in subpopulations:

\begin{theorem}\label{eig}
Assume that the matrix $D$, as defined above, has a unique positive eigenvalue $\lambda$.

Then the  optimal equal-precision allocation under the constraint \eqref{size} is
$$
n_{j,h}=n \tfrac{v_jN_{j,h}S_{j,h}/t_j}{\sum_{k=1}^J\,v_k\,\left(\sum_{g=1}^{H_k}\,N_{k,g}S_{k,g}\right)/t_k},\qquad h=1,\ldots,H_j,\;j=1,\ldots,J,
$$
where $\v=(v_1,\ldots,v_J)^T$ is the eigenvector of $D$ associated to $\lambda$ (with all coordinates of the same sign).

Moreover, the common optimal value of the square of CV's  $T=\lambda$.
\end{theorem}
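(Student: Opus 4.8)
The plan is to read Theorem \ref{eig} as nothing more than the specialization of Theorem \ref{ss} obtained through the dictionary $A_{j,h}=N_{j,h}^2S_{j,h}^2/t_j^2$, $c_j=\sum_{h=1}^{H_j}N_{j,h}S_{j,h}^2/t_j^2$, $x_{j,h}=n_{j,h}$, $x=n$ and $d=J$. The first step is to check that this dictionary actually maps the sampling problem onto the abstract one. Under the implicit non-degeneracy assumptions ($S_{j,h}>0$ for every stratum and $t_j>0$ for every domain) the numbers $A_{j,h}$ and $c_j$ are strictly positive, so the vectors $\a,\c$ built from them via \eqref{ac} lie in $(0,\infty)^J$ and the matrix $D$ of Theorem \ref{eig} is exactly the matrix \eqref{matac} to which Theorem \ref{ss} applies.

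Next I would verify that the two optimization problems coincide term by term. The quantity in \eqref{fix} equals $\sum_{h=1}^{H_j}\tfrac{A_{j,h}}{x_{j,h}}-c_j$, which is the objective $T$ of \eqref{tmin2}, since $\tfrac{1}{t_j^2}\sum_h N_{j,h}^2S_{j,h}^2/n_{j,h}=\sum_h A_{j,h}/x_{j,h}$ and $\tfrac{1}{t_j^2}\sum_h N_{j,h}S_{j,h}^2=c_j$; and the size constraint \eqref{size} is the constraint \eqref{wiez21} with $x=n$. Hence minimizing the common squared CV in \eqref{fix} subject to \eqref{size} is literally the problem solved by Theorem \ref{ss}.

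The one place that needs a word of care is the hypothesis: Theorem \ref{ss} asks for a unique \emph{simple} positive eigenvalue, whereas Theorem \ref{eig} only postulates a unique positive eigenvalue. But the Weyl-inequality computation in the proof of Proposition \ref{linalg} shows $\lambda_{J-1}(D)<0$, so the top eigenvalue $\lambda_J(D)$ is strictly larger than all the others; consequently, if $D$ has any positive eigenvalue it must be $\lambda_J(D)$, and being strictly separated it is automatically simple. So the two hypotheses agree and Theorem \ref{ss} may be invoked directly.

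Finally, the conclusion of Theorem \ref{ss} gives $x_{j,h}=x\,\tfrac{v_j\sqrt{A_{j,h}}}{\sum_k v_k\sum_g\sqrt{A_{k,g}}}$ together with $T=\lambda$. Since all the constants are positive, $\sqrt{A_{j,h}}=N_{j,h}S_{j,h}/t_j$; substituting this along with $x=n$ and $d=J$ yields exactly the displayed formula for $n_{j,h}$, and $T=\lambda$ is the stated optimal value of the squared CV. The fact that the eigenvector $\v$ can be chosen with all coordinates of the same sign — so that every $n_{j,h}$ comes out positive, as an allocation must — is already part of Theorem \ref{ss} (via the Perron--Frobenius step), so I would simply cite it. In short, I expect no real obstacle beyond the bookkeeping just described; the substantive work has already been done upstream in Theorem \ref{ss} and Proposition \ref{linalg}.
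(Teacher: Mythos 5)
Your proposal is correct and follows essentially the same route as the paper, which presents Theorem \ref{eig} as a direct specialization of Theorem \ref{ss} under exactly the dictionary you describe. Your extra observation that the Weyl-inequality step forces $\lambda_{J-1}(D)<0$, so that a unique positive eigenvalue is automatically simple, is a sound and welcome clarification of the slight mismatch between the hypotheses of the two theorems, but it does not change the substance of the argument.
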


\begin{remark}\label{simpli}
{\em Assume that the overall sample size $n$ in \eqref{size} satisfies
\be\label{rest}
n<\sum_{j=1}^J\,\frac{\left(\sum_{h=1}^{H_j}\,N_{j,h}\,S_{j,h}\right)^2}{\sum_{h=1}^{H_j}\,N_{j,h}\,S_{j,h}^2}.
\ee
Let $D$ be the $J\times J$ matrix defined through \eqref{matac} with $\a$ and $\c$ as above. Then by Proposition \ref{linalg} it follows that assumptions of Theorem \ref{eig} are satisfied. Therefore the allocation using the eigenvector as given in the thesis of Theorem \ref{eig} is correct.}
\end{remark}

\begin{remark}\label{LW} {\em Consider  SRSWOR in each of subpopulations and the question of allocation $(n_1,\ldots,n_J)$ under the constraint $\sum_{j=1}^J\,n_j=n$, where $n_j$ denotes the size of the sample in $U_j$, $j=1,\ldots,J$. Moreover,
\be\label{rest1}
\tfrac{N_j^2S_j^2}{t_i^2n_j}-\tfrac{1}{t_j^2}\,N_jS_j^2=T,\qquad j=1,\ldots,J,
\ee
where $N_j=\#(U_j)$ and $S_j^2=\tfrac{1}{N_j-1}\sum_{k\in U_j}\,(y_k-\bar{y}_{U_j})^2$ with $\bar{y}_{U_j}=\tfrac{1}{N_j}\sum_{k\in U_j}\,y_k$, $j=1,\ldots,J$.

This situation is embedded in the one we considered in Theorem \ref{eig} by taking $H_j=1$, $j=1,\ldots,J$. Then the vectors $\a$ and $\c$  od Corollary \ref{linalg} are of the form
$$
\a=\tfrac{1}{\sqrt{n}}\,\left(\tfrac{1}{t_1}\,N_1S_1,\ldots,\tfrac{1}{t_N}\,N_JS_J\right)^T\qquad\mbox{and}\qquad \c=\left(\tfrac{1}{t_1^2}\,N_1S_1^2,\ldots,\tfrac{1}{t_N^2}\,N_JS_J^2\right)^T.
$$
Note that the assumption \eqref{rest} is automatically satisfied since its right hand side equals $N\;(>n)$.

With $\v$ denoting the eigenvector from Theorem \ref{eig} we obtain
$$
n_j=n\tfrac{v_jN_jS_j/t_j}{\sum_{i=1}^J\,v_iN_iS_i/t_i},\qquad j=1,\ldots,J.
$$

Alternatively, we can follow the approach described in Introduction: $T$ can be obtained as the unique solution of equation
\be\label{equa}
n=\sum_{j=1}^J\,\tfrac{N_j^2S_j^2}{T\,t_j^2+N_jS_j^2};
\ee
then
\be\label{nj1}
n_j=\tfrac{N_j^2S_j^2/t_j^2}{T+N_jS_j^2/t_j^2},\qquad j=1,\ldots,J.
\ee
}
\end{remark}

\begin{remark}\label{stra}
{\em An alternative approach to the general situation in which sub-populations are stratified is via the optimal Neyman allocation in each of sub-populations. That is we assume that for any $i=1,\ldots,J$
\be\label{njh}
n_{j,h}=n_j\tfrac{N_{j,h}S_{j,h}}{\sum_{g=1}^{H_j}\,N_{j,g}S_{j,g}},\qquad h=1,\ldots,L_j
\ee
were $n_1+\ldots+n_J=n$. Therefore, \eqref{fix} leads to
\be\label{nj}
\tfrac{\left(\sum_{h=1}^{H_j}\,N_{j,h}S_{j,h}\right)^2}{n_jt_j^2}-\tfrac{\sum_{h=1}^{H_j}\,N_{j,h}S_{j,h}^2}{t_j^2}=T,\qquad j=1,\ldots,J.
\ee
Therefore (similarly as in Introduction) we can solve \eqref{nj} for $n_j$, $j=1,\ldots,J$. Then using the constraint for the overall size of the sample we arrive at the equation for unknown $T$
\be\label{equa_st}
n=\sum_{j=1}^J\,\tfrac{\left(\sum_{h=1}^{H_j}\,N_{j,h}S_{j,h}\right)^2}{T\,t_j^2+\sum_{h=1}^{H_j}\,N_{j,h}S_{j,h}^2}.
\ee
Note that under condition \eqref{rest} a unique solution for $T$ exists. It is obtained numerically. Then  $n_j$ is obtained from \eqref{nj} and finally $n_{j,h}$ can be computed from \eqref{njh}.
One can also derive the equation \eqref{equa_st} by minimizing the sample size in each subpopulation
$n_j=n_{j,1}+\ldots+n_{j,H_j},\ j=1,\ldots,J,$ subject to a common precision $T$ (see S\"arndal, Swensson, Wretman (1992), Ch. 3.7.3).}
\end{remark}

\section{Equal-precision optimal allocation in subpopulations in two-stage sampling}
In this section we consider optimal equal-precision allocations under two-stage sampling. In the case of stratification on both stages and stratified simple random sampling without replacement (SRSWOR) we improve the result from NW in two directions. First, we relax some technical assumptions which originally were designed in order to use directly the Perron-Frobenius theorem  on eigenvalues of matrices with positive entries. Second, we allow more flexible sampling designs, as stratified  SRSWOR on both stages or Hartley and Rao (1962) systematic  $\pi$ps scheme at the first stage and SRSWOR at the second. In NW stratification was allowed either at the first or at the second stage, but not at both. Additionally, we consider a particular case of fixed sizes of samples of SSUs within PSUs - such an additional restriction is sometimes imposed in real surveys, see e.g. \L yso\'n et al. (2013), p. 28-30.

Similarly, as in the previous section we start with a more general minimization problem.

\subsection{General minimization problem}
In this section we consider and solve a minimization problem which unifies a wide class of optimal allocation problems with the same precision in subpopulations. The approach is similar to the previous section, however we have two vectors to allocate: one responsible for allocation of PSUs and one for allocation of SSUs. The direct numerical method as described in Remark \ref{LW} does not work in such two-stage setting. The reason is that there is no way to express the elements of allocation vectors in terms of unknown common precision $T$. Consequently, analogues of \eqref{equa} and \eqref{nj1} or \eqref{nj} and \eqref{equa_st} are no longer available.

Consider real numbers: $c_j>0$, $A_{j,h}>0$, $B_{j,h,i}\ge 0$, $\alpha_{j,h,i}>0$, $i\in \mathcal{V}_{j,h}$, $h=1,\ldots,H_j$, $j=1,\ldots,d$, $x,z>0$.
Denote \be\label{aa}\a=(a_j,\,j=1,\ldots,d)=\tfrac{1}{\sqrt{x}}\,\left(\sum_{h=1}^{H_j}\,\sqrt{A_{j,h}},\,j=1,\ldots,d\right),\ee \be\label{bc}\b=(b_j,\,j=1,\ldots,J)=\tfrac{1}{\sqrt{z}}\,\left(\sum_{h=1}^{H_j}\,\sum_{i\in\mathcal{V}_{j,h}}\,\sqrt{\alpha_{j,h,i}\,B_{j,h,i}},
\,j=1,\ldots,d\right),\qquad \c=(c_j,\,j=1,\ldots,d).\ee Let
\be\label{dmat}
D=\a\a^T+\b\b^T-\mathrm{diag}(\c).
\ee
\begin{theorem}\label{ogol}
Assume that $D$ has the unique positive eigenvalue $\lambda>0$ and let $\v=(v_1,\ldots,v_d)$ be a respective eigenvector. Then the problem of minimization of
\be\label{tmin}
0<T=\sum_{h=1}^{H_j}\,\tfrac{1}{x_{j,h}}\left(A_{j,h}+\sum_{i\in\mathcal{V}_{j,h}}\,\tfrac{B_{j,h,i}}{z_{j,h,i}}\right)-c_j,\quad j=1,\ldots,d,
\ee
where $x_{j,h}>0$, $z_{j,h,i}>0$, $i\in\mathcal{V}_{j,h}$, $h=1,\ldots,H_j$, $j=1,\ldots,d$, under the constraints
\be\label{wiez1}
\sum_{j=1}^d\,\sum_{h=1}^{H_j}\,x_{j,h}=x
\ee
and
\begin{equation}\label{wiez2}
\sum_{j=1}^d\,\sum_{h=1}^{H_j}\,x_{j,h}\,\sum_{i\in\mathcal{V}_{j,h}}\,\alpha_{j,h,i}\,z_{j,h,i}=z,
\end{equation}
where $x$ and $z$ are given positive numbers, has the solution
\be\label{xjjh}
x_{j,h}=x\,\tfrac{v_j\sqrt{A_{j,h}}}{\sum_{k=1}^d\,v_k\,\sum_{g=1}^{H_k}\,\sqrt{A_{k,g}}},
\ee
and
\be\label{zjjh}
z_{j,h,i}=\tfrac{z}{x_{j,h}}\;\tfrac{v_j\sqrt{\tfrac{B_{j,h,i}}{\alpha_{j,h,i}}}}{\sum_{k=1}^d\,v_k\,\sum_{g=1}^{H_k}\,\sum_{l\in\mathcal{V}_{k,g}}\,
\sqrt{\alpha_{k,g,l}\,B_{k,g,l}}}.
\ee

Moreover, $T=\lambda$, the unique positive eigenvalue of matrix $D$.
\end{theorem}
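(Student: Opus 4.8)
The plan is to mimic the Lagrange-multiplier argument of Theorem~\ref{ss}, but now with two constraints \eqref{wiez1} and \eqref{wiez2}, which introduces a second multiplier and the inner allocation variables $z_{j,h,i}$. First I would write the Lagrange function
\[
F(T,\x,\z)=T+\sum_{j=1}^d\,\mu_j\Big(\sum_{h=1}^{H_j}\tfrac{1}{x_{j,h}}\big(A_{j,h}+\sum_{i\in\mathcal{V}_{j,h}}\tfrac{B_{j,h,i}}{z_{j,h,i}}\big)-c_j-T\Big)+\mu\sum_{j=1}^d\sum_{h=1}^{H_j}x_{j,h}+\nu\sum_{j=1}^d\sum_{h=1}^{H_j}x_{j,h}\sum_{i\in\mathcal{V}_{j,h}}\alpha_{j,h,i}z_{j,h,i},
\]
and compute the stationarity conditions. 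Differentiating in $z_{j,h,i}$ gives $-\mu_j A_{j,h}^{0}\cdot$(no $A$ term)$\ldots$ — more precisely $-\mu_j \tfrac{B_{j,h,i}}{x_{j,h}z_{j,h,i}^2}+\nu\,x_{j,h}\,\alpha_{j,h,i}=0$, so $z_{j,h,i}=\sqrt{\tfrac{\mu_j}{\nu}}\,\tfrac{1}{x_{j,h}}\sqrt{\tfrac{B_{j,h,i}}{\alpha_{j,h,i}}}$; this is the source of the $z/x_{j,h}$ prefactor and the $\sqrt{B/\alpha}$ shape in \eqref{zjjh}. Differentiating in $x_{j,h}$, substituting the expression just found for $z_{j,h,i}$, the inner sum $\sum_i B_{j,h,i}/z_{j,h,i}$ collapses to $\sqrt{\mu_j\nu}\,x_{j,h}\sum_i\sqrt{\alpha_{j,h,i}B_{j,h,i}}$, and after simplification one gets $x_{j,h}=\sqrt{\tfrac{\mu_j}{\mu}}\sqrt{A_{j,h}}$, exactly as in the one-constraint case, with $v_j:=\sqrt{\mu_j/\mu}$.

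Next I would substitute these stationary forms into the two constraints. Constraint \eqref{wiez1} yields $x=\sum_k v_k\sum_g\sqrt{A_{k,g}}$, identical to \eqref{xsum}, which turns the $x_{j,h}$-formula into \eqref{xjjh}. Constraint \eqref{wiez2}: plugging $x_{j,h}\alpha_{j,h,i}z_{j,h,i}=\sqrt{\mu_j/\mu}\sqrt{\mu_j/\nu}\,\sqrt{\alpha_{j,h,i}B_{j,h,i}}$ and summing gives a relation determining $\sqrt{\mu/\nu}$ in terms of $z$ and the $b_j$'s; carrying this through rewrites \eqref{zjjh} cleanly. Then I would feed the stationary values back into the precision equation \eqref{tmin}: the bracketed term becomes $A_{j,h}+\sqrt{\mu_j\nu}\,x_{j,h}\sum_i\sqrt{\alpha_{j,h,i}B_{j,h,i}}$, divided by $x_{j,h}$ and summed over $h$; using $x_{j,h}=\sqrt{\mu_j/\mu}\sqrt{A_{j,h}}$ the first part produces $\tfrac{1}{v_j}\cdot\tfrac{1}{x}\cdot$(scaling)$\,\sum_h\sqrt{A_{j,h}}\cdot(\text{row sum})$, i.e. the $\a\a^T$ contribution, and the second part produces the $\b\b^T$ contribution; after moving $c_j$ over and multiplying by $v_j$, the $j$-th equation reads $(\a\a^T\v)_j+(\b\b^T\v)_j-c_jv_j=Tv_j$, that is $D\v=T\v$ with $D$ from \eqref{dmat}. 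Since $T>0$ by hypothesis, $T$ is a positive eigenvalue of $D$, hence by assumption $T=\lambda$ and $\v$ is the corresponding eigenvector.

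Finally I would address the sign issue exactly as in the proof of Theorem~\ref{ss}: the allocation formulas \eqref{xjjh} and \eqref{zjjh} only make sense if all coordinates of $\v$ have the same sign, which I get by choosing $\alpha>\max_j c_j$, noting $D+\alpha I$ has strictly positive entries (here using $A_{j,h}>0$, $\alpha_{j,h,i}>0$, $B_{j,h,i}\ge 0$ so that $a_j>0$ and $b_j\ge 0$, whence $a_ja_k+b_jb_k+\alpha\mathbf 1_{j=k}>0$), and applying the Perron--Frobenius theorem to conclude the top eigenvalue of $D$ has a same-sign eigenvector; uniqueness of the positive eigenvalue then forces it to be $\lambda$. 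The main obstacle I anticipate is purely bookkeeping: the double elimination (first $z_{j,h,i}$, then $x_{j,h}$) has to be done in the right order and the many nested sums over $i\in\mathcal{V}_{j,h}$, $h$, $j$ must be tracked carefully so that the $\a\a^T$ and $\b\b^T$ pieces emerge with exactly the normalizations in \eqref{aa}--\eqref{bc}; there is no conceptual difficulty beyond what Theorem~\ref{ss} already contains, but the algebra is more delicate because the $x_{j,h}$ appear both as an outer allocation and inside the definition of the effective SSU cost.
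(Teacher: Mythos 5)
Your proposal follows the paper's proof essentially verbatim: the same Lagrange function with multipliers $\mu_j,\mu,\nu$, the same elimination order (first $z_{j,h,i}$, then $x_{j,h}$, exploiting the cancellation between the $\tfrac{\mu_j}{x_{j,h}^2}\sum_i B_{j,h,i}/z_{j,h,i}$ and $\nu\sum_i\alpha_{j,h,i}z_{j,h,i}$ terms in the $x$-stationarity equation), the same substitution into the two constraints to fix $\sqrt{\mu}$ and $\sqrt{\nu}$, the same reduction to $D\v=T\v$, and the same Perron--Frobenius argument on $D+\alpha I$ for the sign of $\v$. One cosmetic slip: the collapsed inner sum should read $\sum_i B_{j,h,i}/z_{j,h,i}=x_{j,h}\sqrt{\nu/\mu_j}\,\sum_i\sqrt{\alpha_{j,h,i}B_{j,h,i}}$ rather than $\sqrt{\mu_j\nu}\,x_{j,h}\sum_i\sqrt{\alpha_{j,h,i}B_{j,h,i}}$; with the correct factor the cancellation and the $\b\b^T$ contribution come out exactly as you assert.
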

\begin{proof} The proof adapts the argument used in the proof of Theorem \ref{ss} to the more complex situation of Theorem \ref{ogol}.

Consider the Lagrange function
$$
F(T,\,\x,\,\z)=T+\sum_{j=1}^d\,\lambda_j\left(\sum_{h=1}^{H_j}\,\tfrac{1}{x_{j,h}}
\left(A_{j,h}+\sum_{i\in\mathcal{V}_{j,h}}\,\tfrac{B_{j,h,i}}{z_{j,h,i}}\right)-c_j-T\right)$$
$$+\mu\,\sum_{j=1}^J\,\sum_{h=1}^{H_j}\,x_{j,h}+\nu\,\sum_{j=1}^J\,\sum_{h=1}^{H_j}\,x_{j,h}\,\sum_{i\in\mathcal{V}_{j,h}}\,\alpha_{j,h,i}\,z_{j,h,i}.
$$

Differentiate with respect to $x_{j,h}$ and $z_{j,h,i}$ to get equations for stationary points:
\be\label{dif1}
\tfrac{\partial\,F}{\partial\,x_{j,h}}=-\tfrac{\lambda_j}{x_{j,h}^2}\left(A_{j,h}+\sum_{i\in\mathcal{V}_{j,h}}\,\tfrac{B_{j,h,i}}{z_{j,h,i}}\right)
+\mu+\nu\sum_{i\in\mathcal{V}_{j,h}}\,\alpha_{j,h,i}\,z_{j,h,i}=0
\ee
and
\be\label{dif2}
\tfrac{\partial\,F}{\partial\,z_{j,h,i}}=-\tfrac{\lambda_j\,B_{j,h,i}}{x_{j,h}\,z^2_{j,h,i}}+\nu\,x_{j,h}\,\alpha_{j,h,i}=0.
\ee
From \eqref{dif2} we have
$$
x_{j,h}\,z_{j,h,i}=\sqrt{\tfrac{\lambda_j\,B_{j,h,i}}{\nu\,\alpha_{j,h,i}}}.
$$
Inserting the above expression into \eqref{wiez2} we obtain
$$
\sqrt{\nu}=\tfrac{1}{z}\,\sum_{j=1}^J\,\sqrt{\lambda_j}\,\sum_{h=1}^{H_j}\,\sum_{i\in\mathcal{V}_{j,h}}\,\sqrt{\alpha_{j,h,i}\,B_{j,h,i}}.
$$
On the other hand, inserting it into \eqref{dif1}, upon cancelations, yields
$$
x_{j,h}=\sqrt{\tfrac{\lambda_j\,A_{j,h}}{\mu}}.
$$
Returning now to \eqref{wiez1} we end up with
$$
\sqrt{\mu}=\tfrac{1}{x}\,\sum_{j=1}^J\,\sqrt{\lambda_j}\,\sum_{h=1}^{H_j}\,\sqrt{A_{j,h}}.
$$

Returning to \eqref{tmin} we get
$$
\sqrt{\tfrac{\mu}{\lambda_j}}\,\sum_{h=1}^{H_j}\,\sqrt{A_{h,j}}+\sqrt{\tfrac{\nu}{\lambda_j}}\,\sum_{h=1}^{H_j}\,\sum_{i\in\mathcal{V}_{j,h}}\,
\sqrt{\alpha_{j,h,i}\,B_{j,h,i}}-c_j=T.
$$

Multiply by $v_j:=\sqrt{\lambda_j}$ and plug in the formulas for $\sqrt{\mu}$ and $\sqrt{\nu}$ to arrive at
$$
\tfrac{1}{x}\left(\sum_{k=1}^J\,v_k\,\sum_{g=1}^{H_k}\,\sqrt{A_{k,g}}\right)\,\sum_{h=1}^{H_j}\,\sqrt{A_{h,j}}$$$$
+\tfrac{1}{z}\left(\sum_{k=1}^J\,v_k\,\sum_{g=1}^{H_k}\,\sum_{l\in\mathcal{V}_{k,g}}\,\sqrt{\alpha_{k,g,l}\,B_{k,g,l}}\right)\,
\sum_{h=1}^{H_j}\,\sum_{i\in\mathcal{V}_{j,h}}\,\sqrt{\alpha_{j,h,i}\,B_{j,h,i}}-c_jv_j=Tv_j.
$$
Note that the above equation is equivalent to
$$
D\,\v=T\,\v,
$$
that is $0<T=\lambda$ is the unique positive eigenvalue and $\v$ is the eigenvector related to $T$.

To show that the eigenvector $\v$ attached to the eigenvalue $\lambda$ has all coordinates of the same sign the argument is exactly the same as in the last part of the proof of Theorem \ref{ss}.
\end{proof}

\begin{remark}\label{rema}
{\em In practical applications it is often important that instead of a constant $T$ there are priority weights $\kappa_j>0$, assigned to each constraint \eqref{tmin}, $j=1,\ldots,d$. That is, $T$ is replaced by $\kappa_j T$ at the left hand side of \eqref{tmin} for every $j=1,\ldots,d$. Note that this situation can be rather trivially reduced to the one considered in Theorem \ref{ogol}. This is done by dividing both sides of the $j$th new constraint equation (with $\kappa_j T$ at the left hand side) by $\kappa_j$. Then we obtain \eqref{tmin} with $A_{j,h}$, $B_{j,h,i}$ and $c_j$ changed, respectively, into $A_{j,h}/\kappa_j$, $B_{j,h,i}/\kappa_j$ and $c_j/\kappa_j$. The vectors $\a$ and $\b$ given in \eqref{aa} and $\eqref{bc}$ and the matrix $D$ in \eqref{dmat} have to be updated similarly. Consequently, the solution of the minimization problem as given in \eqref{xjjh} and \eqref{zjjh} refers to the eigenvector $\v$ of such updated matrix $D$; moreover in the formulas \eqref{xjjh} and \eqref{zjjh} all the quantities of the form $A_{j,h}$, $B_{j,h,i}$ have to be changed into $A_{j,h}/\kappa_j$, $B_{j,h,i}/\kappa_j$.}
\end{remark}

Since $D$ is a rank two perturbation of a diagonal matrix one may use Weyl inequalities to establish conditions under which $D$ has a unique positive eigenvalue. Such a sufficient condition is given below.

\begin{proposition}\label{linal}
Let $\a,\,\c\in(0,\infty)^d$ and $\b\in[0,\infty)^d$ be such that
\be\label{warun}
\sum_{i=1}^d\,\tfrac{a_i^2+b_i^2}{c_i}\;-\sum_{\substack{i,j=1\\i\ne j}}^d\,\tfrac{(a_ib_j-a_jb_i)^2}{c_ic_j}>1.
\ee
Then the matrix $D$ defined in \eqref{dmat} has a unique positive eigenvalue $\lambda$.
\end{proposition}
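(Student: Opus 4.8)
The plan is to imitate the proof of Proposition~\ref{linalg}, the only structural change being that $D$ is now a perturbation of $-\mathrm{diag}(\c)$ of rank at most two rather than one. So I would first use the Weyl inequalities to show that $D$ has at most two non-negative eigenvalues, and then compute $\det D$ explicitly to pin down the signs of those two eigenvalues. The case $d=1$ is immediate, since then \eqref{warun} reads $a_1^2+b_1^2>c_1$, i.e.\ $D>0$; so assume $d\ge 2$. For the first step, note that $\a\a^T+\b\b^T$ is non-negative definite of rank at most $2$, so for $d\ge 3$ its $d-2$ smallest eigenvalues vanish, i.e.\ $\lambda_{d-2}(\a\a^T+\b\b^T)=0$. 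Applying \eqref{weyl1} with $A=\a\a^T+\b\b^T$, $B=-\mathrm{diag}(\c)$ and $k=d-2$ gives $\lambda_{d-2}(D)\le 0+\lambda_d(-\mathrm{diag}(\c))=-\min_{1\le j\le d}c_j<0$, hence $\lambda_1(D)\le\cdots\le\lambda_{d-2}(D)<0$; for $d=2$ this statement is empty but harmless, since $D$ then has only two eigenvalues. In all cases at most two eigenvalues of $D$ are non-negative.

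Next I would compute $\det D$. Expanding $\det\big(-\mathrm{diag}(\c)+\a\a^T+\b\b^T\big)$ by multilinearity in the columns gives a sum over subsets $S\subseteq\{1,\dots,d\}$ of $\big(\prod_{k\notin S}(-c_k)\big)$ times the principal minor of $\a\a^T+\b\b^T$ on $S$. Since $\a\a^T+\b\b^T$ has rank at most $2$, only $|S|\le 2$ contributes; the order-one minors are $a_i^2+b_i^2$ and the order-two minors equal $(a_i^2+b_i^2)(a_j^2+b_j^2)-(a_ia_j+b_ib_j)^2=(a_ib_j-a_jb_i)^2$ by Lagrange's identity. Collecting the common factor $\prod_k c_k$ yields
$$\det D=(-1)^d\Big(\prod_{k=1}^{d}c_k\Big)\Big(1-\sum_{i=1}^{d}\tfrac{a_i^2+b_i^2}{c_i}+\sum_{1\le i<j\le d}\tfrac{(a_ib_j-a_jb_i)^2}{c_ic_j}\Big),$$
which is the rank-two analogue of the determinant formula in Proposition~\ref{linalg}.

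To conclude, observe that $\sum_{i\ne j}\tfrac{(a_ib_j-a_jb_i)^2}{c_ic_j}=2\sum_{i<j}\tfrac{(a_ib_j-a_jb_i)^2}{c_ic_j}$ and the latter sum is non-negative, so \eqref{warun} forces $1-\sum_i\tfrac{a_i^2+b_i^2}{c_i}+\sum_{i<j}\tfrac{(a_ib_j-a_jb_i)^2}{c_ic_j}<0$. Hence $\det D\ne 0$ and $\mathrm{sgn}\,\det D=(-1)^{d-1}$. Since $\det D=\prod_{k=1}^{d}\lambda_k(D)$ and $\lambda_1(D),\dots,\lambda_{d-2}(D)$ are all negative by the first step, it follows that $\mathrm{sgn}\big(\lambda_{d-1}(D)\,\lambda_d(D)\big)=(-1)^{d-1}/(-1)^{d-2}=-1$; together with $\lambda_{d-1}(D)\le\lambda_d(D)$ this gives $\lambda_{d-1}(D)<0<\lambda_d(D)$. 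Thus $\lambda:=\lambda_d(D)$ is the unique positive eigenvalue of $D$ and it is simple.

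I do not expect a genuine obstacle here; the only points needing care are the vanishing of the order-$\ge 3$ principal minors of $\a\a^T+\b\b^T$ (which is exactly what collapses the determinant to the displayed three-term expression) and the sign bookkeeping in the last two steps. It is worth noting that \eqref{warun} is stronger than what this method actually needs: the sharp condition it produces is $\sum_i\tfrac{a_i^2+b_i^2}{c_i}-\sum_{i<j}\tfrac{(a_ib_j-a_jb_i)^2}{c_ic_j}>1$, the displayed determinant being negative precisely under that inequality.
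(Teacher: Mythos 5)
Your proof is correct and follows essentially the same route as the paper: Weyl's inequality forces $\lambda_{d-2}(D)<0$, and the explicit rank-two determinant expansion pins down the sign of $\lambda_{d-1}(D)\,\lambda_d(D)$. Your closing observation is also consistent with the paper, whose determinant formula is written with the ordered-pair sum $\sum_{i\ne j}$ (twice your $\sum_{i<j}$), so that \eqref{warun} as stated is indeed slightly stronger than the sharp determinant condition your computation produces.
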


\begin{proof} By $\lambda_1(X)\le \ldots\le \lambda_d(X)$ we denote eigenvalues of $d\times d$ matrix $X$. Take $A=\a\a^T+\b\b^T$ and $B=-\mathrm{diag}(\c)$. Since $A$ is of rank at most 2, we have $\lambda_{d-2}(A)=0$. Consequently, the Weyl inequality \eqref{weyl1} with $k=d-2$ implies $\lambda_{d-2}(D)<0$ since all eigenvalues of $B$ are negative.

Therefore,
\be\label{detd}
\mathrm{sgn}\,\det\,D=(-1)^{d-2}\mathrm{sgn}(\delta_{d-1}\delta_d).
\ee

On the other hand expanding determinant of $D$
$$
\det\,D=\det\,\left[\begin{array}{cccc}
a_1^2+b_1^2-c_1 & a_1a_2+b_1b_2 & \ldots & a_1a_d+b_1b_d \\
a_1a_2+b_1b_2 & a_2^2+b_2^2-c_2 & \ldots & a_2a_d+b_2b_d \\
\ldots & \ldots & \ldots & \ldots \\
a_1a_d+b_1b_d & a_2a_d+b_2b_d & \ldots & a_d^2+b_d^2-c_d
\end{array} \right]
$$
and using the fact that $\a\a^T+\b\b^T$ is of rank at most two we obtain
$$
\det\,D=(-1)^{d-2}\,\sum_{\substack{i,j=1\\i\ne j}}^d\,(a_ib_j-a_jb_i)^2\prod_{\substack{k=1\\k\not\in\{i,j\}}}^d\,c_k+(-1)^{d-1}\,\sum_{i=1}^d\,(a_i^2+b_i^2)\prod_{\substack{k=1\\k\ne i}}^d\,c_k+(-1)^d\prod_{k=1}^d\,c_k$$$$=(-1)^d\prod_{k=1}^d\,c_k\,\left[1-\sum_{i=1}^d\,\tfrac{a_i^2+b_i^2}{c_i}+\sum_{\substack{i,j=1\\i\ne j}}^d\,\tfrac{(a_ib_j-a_jb_i)^2}{c_ic_j}\right].
$$

Comparing the above formula with \eqref{detd} we have
$$
\mathrm{sgn}(\delta_{d-1})\,\mathrm{sgn}(\delta_d)=\mathrm{sgn}\left[1-\sum_{i=1}^d\,\tfrac{a_i^2+b_i^2}{c_i}+\sum_{\substack{i,j=1\\i\ne j}}^d\,\tfrac{(a_ib_j-a_jb_i)^2}{c_ic_j}\right]
$$
which is negative due to \eqref{warun}. Since $\lambda_{d-1}(D)\le \lambda_d(D)$ then necessarily $\lambda_{d-1}(D)<0<\lambda_d(D):=\lambda$.
\end{proof}

Note that if $a_i^2+b_i^2>c_i$, $i=1,\ldots, d$, then $D$ is a matrix with all positive entries and the result of Theorem \ref{linal} holds by the Perron-Frobenius theorem. Under such assumption there are situations in which condition \eqref{warun} may not be satisfied, that is Theorem \ref{linal} does not cover fully this classical case.

\subsection{Application to stratified two-stage sampling}
The population $U$ consists of subpopulation $U_1,\ldots,U_J$. Each subpopulation $U_j$ is partitioned into primary sampling units (PSU's) which are structured into strata $\mathcal{W}_{j,h}$, $h=1,\ldots,H_j$, $j=1,\ldots,J$. Each primary unit $i\in \mathcal{W}_{j,h}$ consists of secondary sampling units (SSU's) which are again structured into strata $\mathcal{W}_{j,h,i,g}$, $g=1,\ldots,G_{j,h,i}$. Let $M_{j,h}$ be the number of PSU's in $\mathcal{W}_{j,h}$ and $N_{j,h,i,g}$ be the number of SSU's in $\mathcal{W}_{j,h,i,g}$.  In the schemes we describe below stratified SRSWOR is used at the second stage.
\subsubsection{SRSWOR at the first stage}
The sample is chosen as follows: At the first stage a sample $\mathcal{S}$ of size $m_{j,h}$ of PSU's is selected from $\mathcal{W}_{j,h}$, $h=1,\ldots,H_j$, $j=1,\ldots,J$,  according to $\mathrm{SRSWOR}$. At the second stage a sample $S$ of size $n_{j,h,i,g}$ of SSU's is selected from $\mathcal{W}_{j,h,i,g}$, $g=1,\ldots,G_{j,h,i}$, only for PSU's $i\in \mathcal{S}$, according to $\mathrm{SRSWOR}$.

The variance of $\pi$-estimator of the total of $\mathcal{Y}$ over subpopulation $U_j$ has the form, see, e.g. S\"arndal, Swensson, Wretman (1992), Ch. 4.3
$$
\sum_{h=1}^{H_j}\,\left(\tfrac{1}{m_{j,h}}-\tfrac{1}{M_{j,h}}\right)M^2_{j,h}D^2_{j,h}+\sum_{h=1}^{H_j}\tfrac{M_{j,h}}{m_{j,h}}\,
\sum_{i\in \mathcal{W}_{j,h}}\,\sum_{g=1}^{G_{j,h,i}}\,\left(\tfrac{1}{n_{j,h,i,g}}-\tfrac{1}{N_{j,h,i,g}}\right)N_{j,h,i,g}^2S_{j,h,i,g}^2
$$
where
$$
D^2_{j,h}=\tfrac{1}{M_{j,h}-1}\sum_{i\in \mathcal{W}_{j,h}}\left(t_i-\bar{t}_{j,h}\,\right)^2
$$
with
$$
t_i=\sum_{k\in V_i}\,y_k\qquad \forall\;\;\mbox{PSU's}\;\;V_i\quad\mbox{and}\quad \bar{t}_{j,h}=\tfrac{1}{M_{j,h}}\sum_{i\in\mathcal{W}_{j,h}}\,t_i
$$
and
$$
S_{j,h,i,g}^2=\tfrac{1}{N_{j,h,i,g}-1}\,\sum_{k\in \mathcal{W}_{j,h,i,g}}\,\left(y_k-\bar{t}_{j,h,i,g}\right)^2
$$
with
$$
\bar{t}_{j,h,i,g}=\tfrac{1}{N_{j,h,i,g}}\,\sum_{k\in \mathcal{W}_{j,h,i,g}}\,y_k.
$$

We assume that the size of the PSU's sample $\mathcal{S}$ is
\be\label{PUS}
\sum_{j=1}^J\,\sum_{h=1}^{H_j}\,m_{j,h}=m
\ee
and expected size of the SSU's sample $S$ is
\be\label{SUS}
\sum_{j=1}^J\,\sum_{h=1}^{H_j}\,\tfrac{m_{j,h}}{M_{j,h}}\,\sum_{i\in\mathcal{W}_{j,h}}\,\sum_{g=1}^{G_{j,h,i}}\,n_{j,h,i,g}=n.
\ee

Additionally we assume that the precision of $\pi$-estimator in every subpopulation is constant, that is
\be\label{copr}
\sum_{h=1}^{H_j}\,\tfrac{M_{j,h}}{m_{j,h}}\left(\gamma_{j,h}+
\sum_{i\in\mathcal{W}_{j,h}}\,\sum_{g=1}^{G_{j,h,i}}\,\tfrac{\beta_{j,h,i,g}}{n_{j,h,i,g}}\right)-c_j=T,\qquad j=1,\ldots,J,
\ee
where
$$
\gamma_{j,h}=\tfrac{1}{T_j^2}\,\left(M_{j,h}D_{j,h}^2-\sum_{i\in\mathcal{W}_{j,h}}\,\sum_{g=1}^{G_{j,h,i}}\,N_{j,h,i,g}\,S_{j,h,i,g}^2\right),
$$
$$
\beta_{j,h,i,g}=\tfrac{1}{T_j^2}\,N_{j,h,i,g}^2\,S_{j,h,i,g}^2\qquad\mbox{and}\qquad c_j=\tfrac{1}{T_j^2}\,\sum_{h=1}^{H_j}\,M_{j,h}\,D_{j,h}^2,
$$
for $T_j=\sum_{h=1}^{H_j}\,\sum_{i\in \mathcal{W}_{j,h}}\,t_i$, $j=1,\ldots,J$.

Now we use Theorem \ref{ogol} with $\mathcal{V}_{j,h}=\bigcup_{i\in\mathcal{W}_{j,h}}\,\widetilde{W}_{j,h,i}$, where  $\widetilde{W}_{j,h,i}=\{\mathcal{W}_{j,h,i,g},\,g=1,\ldots,G_{j,h,i}\}$ is the set of strata of SSU's in the $i$th PSU of $\mathcal{W}_{j,h}$, and with
$$
A_{j,h}=M_{j,h}\,\gamma_{j,h},\quad B_{j,h,(i,g)}=M_{j,h}\,\beta_{j,h,i,g},\quad \alpha_{j,h,(i,g)}=M_{j,h}^{-1},\quad x=m,\quad z=n.
$$
In the above formulas we identified $i$ from Theorem \ref{ogol} with the pair $(i,g)$ in the special setting considered here.
Directly from Theorem \ref{ogol} we get the following result:

\begin{theorem}\label{eig1}
Assume that the matrix $D$ defined by \eqref{dmat} has the unique positive eigenvalue $\lambda$.

Assume that the overall PSU's sample size is $m$ and the expected overall SSU's sample size is $n$. Assume that $\gamma_{j,h}>0$, $h=1,\ldots,H_j$, $j=1,\ldots,J$.

Then the  optimal equal-precision allocation in strata is
$$
m_{j,h}=m\,\frac{v_j\sqrt{M_{j,h}\,\gamma_{j,h}}}{\sum_{k=1}^J\,v_k\sum_{r=1}^{H_k}\,\sqrt{M_{k,h}\,\gamma_{k,h}}},
$$
$h=1,\ldots,H_j,\;j=1,\ldots,J$, and
$$
n_{j,h,i,g}=n \frac{v_j\,M_{j,h}\,\sqrt{\,\beta_{j,h,i,g}}}{m_{j,h}\,\sum_{k=1}^J\,v_k\sum_{r=1}^{H_k}\,\sum_{l\in \mathcal{W}_{k,r}}\,\sum_{s=1}^{G_{k,r,l}}\,\sqrt{\beta_{k,r,l,s}}},
$$
$g=1,\ldots,G_{j,h,i},\;i\in\mathcal{W}_{j,h},\;h=1,\ldots,H_j,\;j=1,\ldots,J$,where $\v=(v_1,\ldots,v_J)^T$ is the unique eigenvector with all coordinates of the same sign of the matrix $D$.

Moreover, the common optimal value of the square of precisions (CV's) $T=\lambda$, the unique positive eigenvalue of $D$.
\end{theorem}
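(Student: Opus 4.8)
The plan is to obtain Theorem \ref{eig1} as a direct specialization of Theorem \ref{ogol}, so that the proof is essentially a dictionary between the abstract quantities of the general minimization problem and the concrete two-stage quantities, together with a verification that the hypotheses of Theorem \ref{ogol} hold. First I would set $d=J$ and let the abstract index $i\in\mathcal{V}_{j,h}$ range over pairs $(i,g)$ with $i\in\mathcal{W}_{j,h}$ and $g\in\{1,\ldots,G_{j,h,i}\}$, so that $\mathcal{V}_{j,h}=\bigcup_{i\in\mathcal{W}_{j,h}}\widetilde{W}_{j,h,i}$ as in the statement. The decision variables become $x_{j,h}=m_{j,h}$ and $z_{j,h,(i,g)}=n_{j,h,i,g}$, and the data are $A_{j,h}=M_{j,h}\gamma_{j,h}$, $B_{j,h,(i,g)}=M_{j,h}\beta_{j,h,i,g}$, $\alpha_{j,h,(i,g)}=M_{j,h}^{-1}$, $c_j$ as defined before the theorem, $x=m$ and $z=n$.

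Next I would check the positivity requirements of Theorem \ref{ogol}: $A_{j,h}>0$ is exactly the assumption $\gamma_{j,h}>0$ (recall $M_{j,h}>0$); $B_{j,h,(i,g)}=\tfrac{1}{T_j^2}M_{j,h}N_{j,h,i,g}^2S_{j,h,i,g}^2\ge 0$; $\alpha_{j,h,(i,g)}=M_{j,h}^{-1}>0$; and $c_j=\tfrac{1}{T_j^2}\sum_h M_{j,h}D_{j,h}^2>0$. I would also note that the vectors $\a$ and $\b$ built from these data through \eqref{aa} and \eqref{bc}, and hence the matrix $D$ of \eqref{dmat}, coincide with the matrix in the statement; the only point here is the identity $\sqrt{\alpha_{j,h,(i,g)}B_{j,h,(i,g)}}=\sqrt{\beta_{j,h,i,g}}$, which yields $b_j=\tfrac{1}{\sqrt n}\sum_h\sum_{i,g}\sqrt{\beta_{j,h,i,g}}$. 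The hypothesis that $D$ has a unique positive eigenvalue $\lambda$ with a constant-sign eigenvector $\v$ is carried over verbatim (and in applications can be secured through Proposition \ref{linal}).

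Then I would verify that the minimization problem itself matches. Constraint \eqref{PUS} is \eqref{wiez1} with $x=m$; constraint \eqref{SUS} is \eqref{wiez2} with $z=n$, because
$$\sum_{j,h}m_{j,h}\sum_{i,g}\alpha_{j,h,(i,g)}\,n_{j,h,i,g}=\sum_{j,h}\tfrac{m_{j,h}}{M_{j,h}}\sum_{i,g}n_{j,h,i,g};$$
and the precision functional \eqref{copr} is the objective \eqref{tmin}, since
$$\tfrac{1}{m_{j,h}}\Bigl(A_{j,h}+\sum_{i,g}\tfrac{B_{j,h,(i,g)}}{n_{j,h,i,g}}\Bigr)=\tfrac{M_{j,h}}{m_{j,h}}\Bigl(\gamma_{j,h}+\sum_{i,g}\tfrac{\beta_{j,h,i,g}}{n_{j,h,i,g}}\Bigr).$$
(That this functional is indeed the square of the CV of the $\pi$-estimator is the regrouping of the between-PSU and within-PSU parts of the variance displayed before the theorem into the coefficients $\gamma_{j,h}$, $\beta_{j,h,i,g}$, $c_j$.) Theorem \ref{ogol} then applies: \eqref{xjjh} becomes the stated formula for $m_{j,h}$ (with $x=m$ and $A_{j,h}=M_{j,h}\gamma_{j,h}$), and \eqref{zjjh}, using $\sqrt{B_{j,h,(i,g)}/\alpha_{j,h,(i,g)}}=M_{j,h}\sqrt{\beta_{j,h,i,g}}$ together with $\sum_{l\in\mathcal{V}_{k,r}}\sqrt{\alpha_{k,r,l}B_{k,r,l}}=\sum_{l\in\mathcal{W}_{k,r}}\sum_{s=1}^{G_{k,r,l}}\sqrt{\beta_{k,r,l,s}}$, becomes the stated formula for $n_{j,h,i,g}$; finally $T=\lambda$ is the concluding clause of Theorem \ref{ogol}.

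I do not expect a genuine obstacle. The only delicate point is the double-index bookkeeping: keeping the PSU label $i$ and the SSU-stratum label $g$ bundled into one abstract index consistently through every substitution, and tracking the factors $M_{j,h}$ that were deliberately absorbed into $A_{j,h}$ and $B_{j,h,(i,g)}$ so that the expected-SSU-size constraint comes out in the weighted form \eqref{SUS} while the precision functional collapses to the clean form \eqref{copr}.
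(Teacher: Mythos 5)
Your proposal is correct and is essentially identical to the paper's own treatment: the paper likewise obtains Theorem \ref{eig1} purely as a specialization of Theorem \ref{ogol} under the same dictionary $A_{j,h}=M_{j,h}\gamma_{j,h}$, $B_{j,h,(i,g)}=M_{j,h}\beta_{j,h,i,g}$, $\alpha_{j,h,(i,g)}=M_{j,h}^{-1}$, $x=m$, $z=n$, with the pair $(i,g)$ playing the role of the abstract index. Your explicit verification of the cancellations $\sqrt{\alpha B}=\sqrt{\beta}$ and $\sqrt{B/\alpha}=M_{j,h}\sqrt{\beta}$ and of the match between \eqref{wiez2} and \eqref{SUS} is exactly the bookkeeping the paper leaves implicit.
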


\begin{remark}\label{rema1}
{\em In the case of unequal predesigned precisions in subpopulations described in terms of priority weights $\kappa_j$ assigned to each subpopulation, as pointed out in Remark \ref{rema}, we need to change $\gamma_{j,h}$ into $\gamma_{j,h}/\kappa_j$ and $\beta_{j,h,i,g}$ into $\beta_{j,h,i,g}/\kappa_j$ in Theorem \ref{eig1}. Similarly, $c_j$ changes into $c_j/\kappa_j$ and  the matrix $D$, see \eqref{dmat}, and its eigenvectors and eigenvalues will be automatically updated. The same applies to Th. \ref{eig22}, Th. \ref{eigHR} and Th. \ref{eigHR'} below.}
\end{remark}

\subsubsection{Fixed SSU's sample size within PSU's} To avoid the situation when SSU's sample size is random one can postulate that within PSU's in a given stratum $\mathcal{W}_{j,h}$ it is constant, that is there are numbers (to be found) $n_{j,h}$ denoting SSU's sample size for any $i\in\mathcal{W}_{j,h}$, $j=1,\ldots,J$, $h=1,\ldots,H_j$. Here we assume SRSWOR with no stratification at the second stage. Therefore, while the constraint \eqref{PUS} remains untouched the constraint \eqref{SUS} changes into
\be\label{SUSf}
\sum_{j=1}^J\,\sum_{h=1}^{H_j}\,m_{j,h}n_{j,h}=n.
\ee
The requirement of the common precision yields
$$
T=\tfrac{1}{T_j^2}\,\sum_{h=1}^{H_j}\,\tfrac{M_{j,h}}{m_{j,h}}\left[\left(M_{j,h}D_{j,h}^2-\sum_{i\in\mathcal{W}_{j,h}}\,N_{j,h,i}S_{j,h,i}^2\right)
+\tfrac{1}{n_{j,h}}\sum_{i\in\mathcal{W}_{j,h}}\,N_{j,h,i}^2S_{j,h,i}^2\right]-\tfrac{1}{T_j^2}\,\sum_{h=1}^{H_j}\,M_{j,h}D_{j,h}^2.
$$

Referring again to Theorem \ref{ogol} we take $\#\,\mathcal{V}_{j,h}=1$,
$$
A_{j,h}=M_{j,h}\gamma_{j,h}, \qquad\mbox{where}\qquad \gamma_{j,h}=\tfrac{1}{T_j^2}\left(M_{j,h}D_{j,h}^2-\sum_{i\in\mathcal{W}_{j,h}}\,N_{j,h,i}S_{j,h,i}^2\right),
$$
$$
B_{j,h,1}=M_{j,h}\beta_{j,h},\qquad\mbox{where}\qquad \beta_{j,h}=\tfrac{1}{T_j^2}\sum_{i\in\mathcal{W}_{j,h}}\,N_{j,h,i}^2S_{j,h,i}^2,
$$
$$
c_j=\tfrac{1}{T_j^2}\,\sum_{h=1}^{H_j}\,M_{j,h}D_{j,h}^2,\qquad
\alpha_{j,h,1}=1,\quad x=m,\quad z=n.
$$
Consequently, directly from Theorem \ref{ogol} we have the following result:
\begin{theorem}\label{eig22}
Assume that the matrix $D$ defined by \eqref{dmat} has the unique positive eigenvalue $\lambda$.

Assume that the overall PSU's sample size is $m$ and the expected overall SSU's sample size is $n$. Assume that $\gamma_{j,h}>0$, $h=1,\ldots,H_j$, $j=1,\ldots,J$.

Then the  optimal equal-precision allocation is
$$
m_{j,h}=m\,\frac{v_j\sqrt{M_{j,h}\,\gamma_{j,h}}}{\sum_{k=1}^J\,v_k\sum_{g=1}^{H_k}\,\sqrt{M_{k,g}\,\gamma_{k,g}}},
$$
and
$$
n_{j,h}=n \frac{v_j\,\sqrt{M_{j,h}\beta_{j,h}}}{m_{j,h}\,\sum_{k=1}^J\,v_k\sum_{r=1}^{H_k}\,\sqrt{M_{k,r}\beta_{k,r}}},
$$
$h=1,\ldots,H_j,\;j=1,\ldots,J$,where $\v=(v_1,\ldots,v_J)^T$ is the unique eigenvector with all coordinates of the same sign of the matrix $D$.

Moreover, the common optimal value of precision $T=\lambda$.
\end{theorem}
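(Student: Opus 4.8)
The plan is to verify that the quantities defined just above the statement fit the abstract template of Theorem~\ref{ogol}, and then simply translate its conclusion. First I would check the algebraic identity: with $T_j=\sum_{h=1}^{H_j}\sum_{i\in\mathcal{W}_{j,h}}t_i$, the common-precision requirement written out above is exactly the variance of the $\pi$-estimator for the domain total divided by $T_j^2$, once one substitutes the constant SSU sample size $n_{j,h}$ for each PSU $i\in\mathcal{W}_{j,h}$ into the two-stage variance formula of Section~3.2.1 (which here has $G_{j,h,i}=1$, so the inner stratum index $g$ disappears and $N_{j,h,i,g},S_{j,h,i,g}$ become $N_{j,h,i},S_{j,h,i}$). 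Grouping the terms that do not involve $n_{j,h}$ into $\gamma_{j,h}$ and the term that does into $\beta_{j,h}/n_{j,h}$, and pulling out the constant $c_j=\tfrac{1}{T_j^2}\sum_h M_{j,h}D_{j,h}^2$, one gets precisely \eqref{tmin} with $\#\mathcal{V}_{j,h}=1$, $A_{j,h}=M_{j,h}\gamma_{j,h}$, $B_{j,h,1}=M_{j,h}\beta_{j,h}$, $\alpha_{j,h,1}=1$. This is a routine rearrangement of the variance expression and is the only computational content of the proof.

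Next I would match the two constraints. The constraint \eqref{PUS} on the total number of sampled PSU's is literally \eqref{wiez1} with $x_{j,h}=m_{j,h}$ and $x=m$. For the SSU constraint, since $\alpha_{j,h,1}=1$ and there is a single SSU index, \eqref{wiez2} reads $\sum_{j,h}x_{j,h}z_{j,h,1}=z$; setting $z_{j,h,1}=n_{j,h}$ and $z=n$ this is exactly \eqref{SUSf}. So the present setting is a genuine specialization of the framework of Theorem~\ref{ogol}.

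With the dictionary in place, I invoke Theorem~\ref{ogol} directly, using the standing hypothesis that $D$ (built from \eqref{dmat} with the vectors $\a,\b,\c$ formed from the above $A_{j,h},B_{j,h,1},c_j$ as in \eqref{aa}--\eqref{bc}) has a unique positive eigenvalue $\lambda$ with eigenvector $\v$ of one sign. Formula \eqref{xjjh} gives $m_{j,h}=m\,v_j\sqrt{A_{j,h}}\big/\sum_k v_k\sum_g\sqrt{A_{k,g}}=m\,v_j\sqrt{M_{j,h}\gamma_{j,h}}\big/\sum_k v_k\sum_g\sqrt{M_{k,g}\gamma_{k,g}}$, which is the first displayed allocation. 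Formula \eqref{zjjh}, with $\alpha_{j,h,1}=1$ and $B_{j,h,1}=M_{j,h}\beta_{j,h}$, gives $n_{j,h}=\tfrac{n}{m_{j,h}}\,v_j\sqrt{M_{j,h}\beta_{j,h}}\big/\sum_k v_k\sum_r\sqrt{M_{k,r}\beta_{k,r}}$, which is the second displayed allocation; and the last clause, $T=\lambda$, is the corresponding clause of Theorem~\ref{ogol}. The hypothesis $\gamma_{j,h}>0$ is what guarantees $A_{j,h}>0$ so that the template of Theorem~\ref{ogol} (which requires $A_{j,h}>0$) is applicable; note $\beta_{j,h}\ge 0$ automatically, and $\beta_{j,h}>0$ in any nondegenerate case, matching the $B_{j,h,i}\ge 0$ hypothesis there.

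The only mild obstacle is bookkeeping: one must be careful that the collapse $G_{j,h,i}=1$ really does reduce the Section~3.2.1 variance to the expression displayed before the theorem, in particular that the term $M_{j,h}D_{j,h}^2-\sum_{i\in\mathcal{W}_{j,h}}N_{j,h,i}S_{j,h,i}^2$ that gets absorbed into $\gamma_{j,h}$ is correctly separated from the constant $\sum_h M_{j,h}D_{j,h}^2$ that becomes $c_j$. Once that rearrangement is checked, everything else is a direct substitution into Theorem~\ref{ogol}; no new analytic argument (in particular, no fresh appeal to Perron--Frobenius or Weyl, both already handled inside Theorem~\ref{ogol}) is needed.
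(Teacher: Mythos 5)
Your proposal is correct and follows exactly the paper's route: the paper likewise treats Theorem~\ref{eig22} as a direct corollary of Theorem~\ref{ogol} via the substitutions $\#\mathcal{V}_{j,h}=1$, $A_{j,h}=M_{j,h}\gamma_{j,h}$, $B_{j,h,1}=M_{j,h}\beta_{j,h}$, $\alpha_{j,h,1}=1$, $x=m$, $z=n$, with \eqref{PUS} and \eqref{SUSf} playing the roles of \eqref{wiez1} and \eqref{wiez2}. Your bookkeeping of the variance rearrangement and of the specializations of \eqref{xjjh} and \eqref{zjjh} matches the paper's, so nothing further is needed.
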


\subsubsection{Hartley-Rao scheme at the first stage}\label{HRscheme}
The sample is chosen as follows: at the first stage a sample $\mathcal{S}$ of PSU's is selected according to a $\pi$ps sampling introduced by Hartley and Rao (1962), known also as $\pi$ps systematic sampling from randomly ordered list. This sampling procedure is applied in each of the strata of the PSU's. That is an additional auxiliary variable $\mathcal{Z}$ is given in the population of PSU's. Each stratum $\mathcal{W}_{j,h}$ is randomly ordered and then $m_{j,h}$ PSU's are chosen by systematic sampling through $m_{j,h}-1$ jumps of the size one starting at the random point from the interval $[0,1]$. Such a procedure gives a random selection of points $x_1,\ldots, x_{m_{j,h}}\in[0,m_{j,h}]$. We have $$\pi^{(I)}_{j,h,i}=m_{j,h}\tilde{z}_{j,h,i},\qquad\mbox{where}\qquad \tilde{z}_{j,h,i}:=\tfrac{z_i}{z_{j,h}}\quad\mbox{and}\quad z_{j,h}=\sum_{i\in\mathcal{W}_{j,h}}\,z_i$$ for any PSU $i$ from $h$th stratum from $j$th subpopulation. The sample of PSU's in $\mathcal{W}_{j,h}$ consists of such PSU $i$'s that $$\tfrac{z_{j,h}}{m_{j,h}}\,x_k\in \left(\sum_{l=1}^{i-1}\,z_l,\,\sum_{l=1}^i\,z_l\right),\qquad k=1,\ldots,m_{j,h}$$ for some $i=1,\ldots,M_{j,h}$. At the second stage a sample $S$ of SSU's is selected according to $\mathrm{SRSWOR}(n_{j,h,i,g})$ from $\mathcal{W}_{j,h,i,g}$, $g=1,\ldots,G_{j,h,i}$, only for PSU's $i\in \mathcal{S}$.

The approximate variance of $\pi$-estimator of the total of $\mathcal{Y}$ over subpopulation $U_j$ has the form, see Hartley and Rao (1962) (their formula (5.17) for the simplified variance of the $\pi$-estimator for the systematic $\pi$ps sampling and  S\"arndal, Swensson, Wretman (1992), Ch. 4.3, for the variance in two-stage sampling)
$$
\sum_{h=1}^{H_j}\,\tfrac{1}{m_{j,h}}\sum_{i\in\mathcal{W}_{j,h}}\,\omega_{j,h,i}(1+\tilde{z}_{j,h,i})
-\sum_{h=1}^{H_j}\,\sum_{i\in\mathcal{W}_{j,h}}\,\tilde{z}_{j,h,i}\,\omega_{j,h,i}$$$$+\sum_{h=1}^{H_j}\,\tfrac{1}{m_{j,h}}\,
\sum_{i\in \mathcal{W}_{j,h}}\,\tfrac{1}{\tilde{z}_{j,h,i}}\sum_{g=1}^{G_{j,h,i}}\,\left(\tfrac{1}{n_{j,h,i,g}}-\tfrac{1}{N_{j,h,i,g}}\right)N_{j,h,i,g}^2S_{j,h,i,g}^2
$$
where
$$
\omega_{j,h,i}=\tilde{z}_{j,h,i}
\left(\tfrac{y_{j,h,i}}{\tilde{z}_{j,h,i}}-y_{j,h}\right)^2 \quad\mbox{and}\quad y_{j,h}=\sum_{i\in\mathcal{W}_{j,h}}\,y_i
$$
and
$$
S_{j,h,i,g}^2=\tfrac{1}{N_{j,h,i,g}-1}\,\sum_{k\in \mathcal{W}_{j,h,i,g}}\,\left(y_k-\bar{t}_{j,h,i,g}\right)^2
$$
with
$$
\bar{t}_{j,h,i,g}=\tfrac{1}{N_{j,h,i,g}}\,\sum_{k\in \mathcal{W}_{j,h,i,g}}\,y_k.
$$

We assume that the size of the PSU's sample $\mathcal{S}$ satisfies the constraint \eqref{PUS} while the formula \eqref{SUS} for expected size of the SSU's sample assumes the form \be\label{SUSHR}
\sum_{j=1}^J\,\sum_{h=1}^{H_j}\,m_{j,h}\,\sum_{i\in\mathcal{W}_{j,h}}\,\tilde{z}_{j,h,i}\sum_{g=1}^{G_{j,h,i}}\,n_{j,h,i,g}=n.
\ee

Denote $$D^2_{j,h}=\sum_{i\in\mathcal{W}_{j,h}}\,\omega_{j,h,i}(1+\tilde{z}_{j,h,i}).$$

Additionally we assume that the precision of $\pi$-estimator in every subpopulation is constant (actually, since we use an approximate formula for the variances, it is not precision but rather approximate precision), that is
\be\label{copr}
\sum_{h=1}^{H_j}\,\tfrac{1}{m_{j,h}}\left(\gamma_{j,h}+
\sum_{i\in\mathcal{W}_{j,h}}\,\tfrac{1}{\tilde{z}_{j,h,i}}\,\sum_{g=1}^{G_{j,h,i}}\,\tfrac{\beta_{j,h,i,g}}{n_{j,h,i,g}}\right)-c_j=T,\qquad j=1,\ldots,J,
\ee
where
\be\label{gjh}
\gamma_{j,h}=\tfrac{1}{T_j^2}\,\left(D_{j,h}^2-\sum_{i\in\mathcal{W}_{j,h}}\,\tfrac{1}{\tilde{z}_{j,h,i}}\,\sum_{g=1}^{G_{j,h,i}}\,N_{j,h,i,g}\,S_{j,h,i,g}^2\right),
\ee
$$
\beta_{j,h,i,g}=\tfrac{1}{T_j^2}\,N_{j,h,i,g}^2\,S_{j,h,i,g}^2\qquad\mbox{and}\qquad c_j=\tfrac{1}{T_j^2}\,\sum_{h=1}^{H_j}\,\sum_{i\in\mathcal{W}_{j,h}}\,\tilde{z}_{j,h,i}\omega_{j,h,i},
$$
for $T_j=\sum_{h=1}^{H_j}\,\sum_{i\in \mathcal{W}_{j,h}}\,t_i$, $j=1,\ldots,J$.

At this stage we again refer to Theorem \ref{ogol}, once again using the identification $i=(i,g)$. Thus we define
$$
A_{j,h}=\gamma_{j,h},\quad B_{j,h,(i,g)}=\tfrac{\beta_{j,h,i,g}}{\tilde{z}_{j,h,i}},\quad \alpha_{j,h,(i,g)}=\tilde{z}_{j,h,i},\quad x=m,\quad z=n.
$$
As a conclusion from Theorem \ref{ogol} we have the result describing (approximate) optimal equal-precision allocation in the scheme considered in this subsection:

\begin{theorem}\label{eigHR} Assume that the matrix $D$ defined in \eqref{dmat} has a unique positive eigenvalue $\lambda$.
Assume that the overall PSU's sample size is $m$ and the expected overall SSU's sample size is $n$. Assume that $\gamma_{j,h}>0$, $h=1,\ldots,H_j$, $j=1,\ldots,J$.

Then the (approximate) optimal equal-precision allocation in strata is
$$
m_{j,h}=m\,\frac{v_j\sqrt{\gamma_{j,h}}}{\sum_{k=1}^J\,v_k\sum_{g=1}^{H_k}\,\sqrt{\gamma_{k,g}}},
$$
$h=1,\ldots,H_j,\;j=1,\ldots,J$ and
$$
n_{j,h,i,g}=n \frac{v_j\,\sqrt{\beta_{j,h,i,g}}/\tilde{z}_{j,h,i}}{m_{j,h}\,\sum_{k=1}^J\,v_k\sum_{r=1}^{H_k}\,\sum_{l\in \mathcal{W}_{k,r}}\,\sum_{s=1}^{G_{k,r,l}}\,\sqrt{\beta_{k,r,l,s}}},
$$
$g=1,\ldots,G_{j,h,i},\;i\in\mathcal{W}_{j,h},\;h=1,\ldots,H_j,\;j=1,\ldots,J$,where $\v=(v_1,\ldots,v_J)^T$ is the unique eigenvector with all coordinates of the same sign of the matrix $D=\a\a^T+\b\b^T-\mathrm{diag}(\c)$.

Moreover, the common optimal value of precision $T=\lambda$, the unique positive eigenvalue of $D$.
\end{theorem}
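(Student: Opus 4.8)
The plan is to recognise the triple \eqref{copr}--\eqref{gjh}, \eqref{PUS}, \eqref{SUSHR} as a concrete instance of the abstract minimisation problem already solved in Theorem \ref{ogol}, and then to read the allocation off from \eqref{xjjh} and \eqref{zjjh}. Throughout one uses the identification, indicated just before the statement, of the generic second-stage index $i\in\mathcal{V}_{j,h}$ of Theorem \ref{ogol} with a pair $(i,g)$, $i\in\mathcal{W}_{j,h}$, $g=1,\ldots,G_{j,h,i}$, and sets $d=J$, $x_{j,h}=m_{j,h}$, $z_{j,h,(i,g)}=n_{j,h,i,g}$, $x=m$, $z=n$, together with $A_{j,h}=\gamma_{j,h}$, $B_{j,h,(i,g)}=\beta_{j,h,i,g}/\tilde z_{j,h,i}$ and $\alpha_{j,h,(i,g)}=\tilde z_{j,h,i}$.

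First I would verify that this data meets the hypotheses of Theorem \ref{ogol}: $c_j>0$ from its definition via the $\omega_{j,h,i}$ and $T_j$; $A_{j,h}=\gamma_{j,h}>0$ is the standing assumption of the present theorem; $\alpha_{j,h,(i,g)}=\tilde z_{j,h,i}>0$ since it is a ratio of positive totals of the auxiliary variable $\mathcal{Z}$; $B_{j,h,(i,g)}=\beta_{j,h,i,g}/\tilde z_{j,h,i}\ge 0$ because $\beta_{j,h,i,g}\ge 0$; and $x=m>0$, $z=n>0$, while a unique positive eigenvalue $\lambda$ of $D$ is assumed. Under these substitutions constraint \eqref{wiez1} is exactly \eqref{PUS}, constraint \eqref{wiez2} is exactly \eqref{SUSHR} (because $\alpha_{j,h,(i,g)}z_{j,h,(i,g)}=\tilde z_{j,h,i}\,n_{j,h,i,g}$), and the objective \eqref{tmin} is exactly the common-precision requirement \eqref{copr} (because $B_{j,h,(i,g)}/z_{j,h,(i,g)}=\tfrac{1}{\tilde z_{j,h,i}}\tfrac{\beta_{j,h,i,g}}{n_{j,h,i,g}}$). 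Hence Theorem \ref{ogol} applies verbatim.

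It then remains to collapse the vectors $\a$, $\b$ of \eqref{aa}--\eqref{bc} and the output formulas \eqref{xjjh}--\eqref{zjjh} to the stated shape. The one point to watch is that $\tilde z_{j,h,i}$ enters $\alpha_{j,h,(i,g)}$ multiplicatively but $B_{j,h,(i,g)}$ through its reciprocal, so that $\sqrt{\alpha_{j,h,(i,g)}B_{j,h,(i,g)}}=\sqrt{\beta_{j,h,i,g}}$ whereas $\sqrt{B_{j,h,(i,g)}/\alpha_{j,h,(i,g)}}=\sqrt{\beta_{j,h,i,g}}/\tilde z_{j,h,i}$. Consequently $a_j=\tfrac{1}{\sqrt m}\sum_h\sqrt{\gamma_{j,h}}$ and $b_j=\tfrac{1}{\sqrt n}\sum_h\sum_{i\in\mathcal{W}_{j,h}}\sum_g\sqrt{\beta_{j,h,i,g}}$, which is precisely the matrix $D=\a\a^T+\b\b^T-\mathrm{diag}(\c)$ of the statement; \eqref{xjjh} with $A_{j,h}=\gamma_{j,h}$ yields the displayed formula for $m_{j,h}$; and \eqref{zjjh} with $B_{j,h,(i,g)}/\alpha_{j,h,(i,g)}=\beta_{j,h,i,g}/\tilde z_{j,h,i}^2$, together with the normalising sum $\sum_k v_k\sum_r\sum_{l\in\mathcal{W}_{k,r}}\sum_s\sqrt{\beta_{k,r,l,s}}$, yields the displayed formula for $n_{j,h,i,g}$. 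The identity $T=\lambda$ and the existence of an eigenvector $\v$ with coordinates of one sign are the remaining conclusions of Theorem \ref{ogol}.

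There is no genuine obstacle here: the analytic core --- the Lagrange-multiplier reduction and the Perron--Frobenius argument producing a single-signed eigenvector --- is already discharged in Theorem \ref{ogol}, and the approximate variance of the $\pi$-estimator underlying \eqref{copr}--\eqref{gjh} is quoted from Hartley and Rao (1962) and S\"arndal, Swensson, Wretman (1992). The step most prone to slip is the consistent handling of the weights $\tilde z_{j,h,i}$ in their three roles --- inside $\alpha_{j,h,(i,g)}$, inside $B_{j,h,(i,g)}$, and in the expected-SSU constraint --- so one should double-check that these combine to give $n$ on the right-hand side of \eqref{SUSHR} and the unweighted $\sqrt{\beta}$-sums in the denominators of the allocation formulas.
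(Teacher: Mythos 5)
Your proposal is correct and follows essentially the same route as the paper: Theorem \ref{eigHR} is obtained there exactly by specializing Theorem \ref{ogol} with $A_{j,h}=\gamma_{j,h}$, $B_{j,h,(i,g)}=\beta_{j,h,i,g}/\tilde z_{j,h,i}$, $\alpha_{j,h,(i,g)}=\tilde z_{j,h,i}$, $x=m$, $z=n$, and your bookkeeping of how $\tilde z_{j,h,i}$ cancels in $\sqrt{\alpha B}$ but survives as a reciprocal in $\sqrt{B/\alpha}$ matches the stated formulas. Your write-up is in fact more explicit than the paper's one-line derivation about verifying the hypotheses of Theorem \ref{ogol} and the correspondence of the constraints.
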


\subsubsection{Fixed SSU's sample size within PSU's} Similarly as in the previous section we consider now the situation when samples sizes of SSU's are fixed for PSU's belonging to the same strata within subpopulation. Then the constraint for common (approximate) precision for all subpopulation reads
$$
\sum_{h=1}^{H_j}\,\tfrac{1}{m_{j,h}}\left(\gamma_{j,h}+
\tfrac{1}{n_{j,h}}\,\sum_{i\in\mathcal{W}_{j,h}}\,\tfrac{\beta_{j,h,i}}{\tilde{z}_{j,h,i}}\right)-c_j=T,\qquad j=1,\ldots,J,
$$
where
$$
\gamma_{j,h}=\tfrac{1}{T_j^2}\,\left(D_{j,h}^2-\sum_{i\in\mathcal{W}_{j,h}}\,\tfrac{1}{\tilde{z}_{j,h,i}}\,\sum_{g=1}^{G_{j,h,i}}\,N_{j,h,i,g}\,S_{j,h,i,g}^2\right)
$$
$$
\beta_{j,h,i}=\tfrac{1}{T_j^2}\,N_{j,h,i}^2\,S_{j,h,i}^2\qquad\mbox{and}\qquad c_j=\tfrac{1}{T_j^2}\,\sum_{h=1}^{H_j}\,\sum_{i\in\mathcal{W}_{j,h}}\,\tilde{z}_{j,h,i}\omega_{j,h,i}.
$$
The constraints regarding sizes of samples assume the form
$$
\sum_{j=1}^J\,\sum_{h=1}^{H_j}\,m_{j,h}=m\qquad\mbox{and}\qquad \sum_{j=1}^J\,\sum_{h=1}^{H_j}\,m_{j,h}n_{j,h}=n.
$$

In Theorem \ref{ogol} we take $\#\,\mathcal{V}_{j,h}=1$,
$$
A_{j,h}=\gamma_{j,h}, \qquad
B_{j,h}=B_{j,h,1}=\sum_{i\in\mathcal{W}_{j,h}}\,\tfrac{\beta_{j,h,i}}{\tilde{z}_{j,h,i}},
$$
$c_j$ as above and $\alpha_{j,h,1}=1$, $x=m$, $z=n$.
Consequently, Theorem \ref{ogol} gives the following result:

\begin{theorem}\label{eigHR'} Assume that the matrix $D$ defined in \eqref{dmat} has a unique positive eigenvalue $\lambda$.
Assume that the overall PSU's sample size is $m$ and the overall SSU's sample size is $n$ and the sample SSU's sizes $n_{j,h}$ are fixed (but unknown) within strata in subpopulations. Assume that $\gamma_{j,h}>0$, $h=1,\ldots,H_j$, $j=1,\ldots,J$.

Then the (approximate) optimal equal-precision allocation in strata is
$$
m_{j,h}=m\,\frac{v_j\sqrt{\gamma_{j,h}}}{\sum_{k=1}^J\,v_k\sum_{g=1}^{H_k}\,\sqrt{\gamma_{k,g}}},
$$
$h=1,\ldots,H_j,\;j=1,\ldots,J$ and
$$
n_{j,h}=n \frac{v_j\,\sqrt{B_{j,h}}}{m_{j,h}\,\sum_{k=1}^J\,v_k\sum_{r=1}^{H_k}\,\sqrt{B_{k,r}}},
$$
$h=1,\ldots,H_j,\;j=1,\ldots,J$, where $\v=(v_1,\ldots,v_J)^T$ is the unique eigenvector with all coordinates of the same sign of the matrix $D=\a\a^T+\b\b^T-\mathrm{diag}(\c)$.

Moreover, the common optimal value of precision $T=\lambda$, the unique positive eigenvalue of $D$.
\end{theorem}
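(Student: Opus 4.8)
The plan is to obtain the statement as a direct specialization of Theorem~\ref{ogol}, in the same way as Theorems~\ref{eig1}, \ref{eig22} and \ref{eigHR} were derived. First I would fix the correspondence of variables: set $d=J$, $x_{j,h}=m_{j,h}$, and — since the SSU sample size is now a single number per stratum of PSU's — take $\mathcal{V}_{j,h}$ to be a singleton with $z_{j,h,1}=n_{j,h}$ and $\alpha_{j,h,1}=1$, together with $A_{j,h}=\gamma_{j,h}$, $B_{j,h}=B_{j,h,1}=\sum_{i\in\mathcal{W}_{j,h}}\beta_{j,h,i}/\tilde{z}_{j,h,i}$, the $c_j$ of the statement, $x=m$ and $z=n$.

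Second, I would check that the three ingredients of Theorem~\ref{ogol} match the present setup. Constraint \eqref{wiez1} becomes $\sum_{j,h}m_{j,h}=m$, i.e. \eqref{PUS}; constraint \eqref{wiez2} becomes $\sum_{j,h}m_{j,h}\,n_{j,h}=n$, the imposed restriction on the overall SSU sample size; and the objective \eqref{tmin} becomes $T=\sum_h m_{j,h}^{-1}\big(\gamma_{j,h}+B_{j,h}/n_{j,h}\big)-c_j$, which is exactly the common-(approximate-)precision equation displayed above, since $\sum_{i\in\mathcal{W}_{j,h}}\beta_{j,h,i}/(\tilde{z}_{j,h,i}n_{j,h})=B_{j,h}/n_{j,h}$. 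This last identification is the only step carrying content: it amounts to taking the approximate Hartley--Rao two-stage variance recalled in Subsection~\ref{HRscheme}, specializing it to $n_{j,h,i,g}\equiv n_{j,h}$ and to SRSWOR without second-stage stratification, dividing by $T_j^2$, and collecting the terms free of $m_{j,h},n_{j,h}$ into $c_j$ and the $1/m_{j,h}$-terms into $\gamma_{j,h}$ and $B_{j,h}$.

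Third, I would invoke Theorem~\ref{ogol}. Its hypothesis — that $D=\a\a^T+\b\b^T-\mathrm{diag}(\c)$ has a unique positive eigenvalue $\lambda$ with a constant-sign eigenvector $\v$ — is assumed here; moreover $A_{j,h}=\gamma_{j,h}>0$ by hypothesis and $B_{j,h}>0$ automatically (as $\beta_{j,h,i}>0$, $\tilde{z}_{j,h,i}>0$), so $\a,\b$ have strictly positive entries. Plugging the above choices into \eqref{xjjh} gives $m_{j,h}=m\,v_j\sqrt{\gamma_{j,h}}\big/\sum_k v_k\sum_g\sqrt{\gamma_{k,g}}$, and plugging them into \eqref{zjjh}, where the inner sum over $\mathcal{V}_{k,g}$ collapses to the single term $\sqrt{\alpha_{k,g,1}B_{k,g}}=\sqrt{B_{k,g}}$ while $\sqrt{B_{j,h,1}/\alpha_{j,h,1}}=\sqrt{B_{j,h}}$, yields $n_{j,h}=\tfrac{n}{m_{j,h}}\,v_j\sqrt{B_{j,h}}\big/\sum_k v_k\sum_r\sqrt{B_{k,r}}$; and $T=\lambda$ is inherited verbatim. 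I do not expect a genuine obstacle: the only place a reader might pause is the reduction of the variance to the displayed precision constraint (checking that the fixed-$n_{j,h}$ two-stage variance is absorbed correctly into $\gamma_{j,h}$, $B_{j,h}$ and $c_j$), and everything after that is bookkeeping, the Lagrange computation and the Perron--Frobenius sign argument for $\v$ having already been carried out once in the proof of Theorem~\ref{ogol}.
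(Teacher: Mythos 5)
Your proposal is correct and follows exactly the route the paper takes: the paper proves Theorem~\ref{eigHR'} precisely by specializing Theorem~\ref{ogol} with $\#\,\mathcal{V}_{j,h}=1$, $A_{j,h}=\gamma_{j,h}$, $B_{j,h,1}=\sum_{i\in\mathcal{W}_{j,h}}\beta_{j,h,i}/\tilde{z}_{j,h,i}$, $\alpha_{j,h,1}=1$, $x=m$, $z=n$, and reading off \eqref{xjjh} and \eqref{zjjh}. Your verification that the constraints and the precision equation reduce correctly, and that the inner sums in \eqref{zjjh} collapse to $\sqrt{B_{j,h}}$, is exactly the bookkeeping the paper leaves implicit.
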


\section{Numerical experiments}

In the experiments, described below, we used the method developed in Subsection \ref{HRscheme}, to analyze optimal equal-precision allocation in the Polish Labour Force Survey (LFS).

An artificial population has been created on the basis of results of a sample survey which accompanied the last virtual census in Poland. The sample for this survey was drawn through stratified sampling with strata at a NUTS5 level (we follow the Eurostat standard NUTS nomenclature of territorial units for statistics in EU; in Poland it refers to the level of municipalities). This sample covered 20\% dwellings in the country. To create the population for simulation purposes the records of the sample where cloned together with data related to surveyed persons in each dwelling with the cloning multiplicity equal to rounded corrected weights for dwellings. As a result an artificial population of 13 243 000 households was constructed.

The sampling scheme in the experiment was exactly the same as in the LFS survey, that is; two-stage with stratification at the primary units level. In each such strata of primary units the sample was drawn according to the standard Hartley-Rao method, with first order inclusion probabilities proportional to the number of dwellings in PSU's. The definition of PSU was based on the one adapted in the LFS as well. That is, in urban regions PSU's were identified with so called census clusters and in non-urban areas they were identified with enumeration census areas. The SSU's were just households. The strata definition for PSU's were adapted from the LFS, which resulted in the total of 61 strata of PSU's. Taking into account one of the four subsamples for the quarter of year used in the LFS (one of the two which are new in the survey) the total number of PSU's and SSU's was designed to be $m=1872$ and $n=13676$, respectively.

On the basis of such pseudo-population (with the variable "number of unemployed in the household" transferred from the 20\% survey, which accompanied the last virtual census) suitable initial parameters for the procedure described in Subsection \ref{HRscheme} were prepared. The variables: number of dwellings in PSU's and number of unemployed in a household (SSU) were essential in constructing matrices $\a\,\a^T$, $\b\,\b^T$ and $\mathrm{diag}(\c)$. As subpopulations the NUTS2 level (in Poland it refers to voivodships) was used. The standard {\em R} function {\em eigen} for numerical computation of eigenvalues and eigenvectors was used - see {\em R} Core Team (2013). Examples of $R$-codes we used for optimal equal-precision allocation are available at \href{https://github.com/rwieczor/eigenproblem_sample_allocation}{https://github.com/rwieczor/eigenproblem\_sample\_allocation}. Theoretical value of CV defined through the maximal eigenvalue was numerically found to be about 9.7\%, which is almost exact value of the square root of the largest eigenvalue, as it should be according to the theoretical results obtained in previous sections. The optimal equal-precision allocation, based on the eigenvector related to the largest eigenvalue, was a base for drawing samples of PSU's and then of SSU's. The experiment was repeated independently 100 times with the average PSU's sample size equal to 1872 and the average SSU's sample size equal to 13669. In each experiment, like in the original LFS survey, the precision of estimates of the variable "number of unemployed at NUTS2 levels" was evaluated through a bootstrap method. One of the variations of the bootstrap method was used, where in each stratum a multiple resampling (in this case 500 times) takes place with replacement of $n_h-1$ subsamples out of $n_h$ PSU's selected for the survey in the $h$th stratum - see McCarthy and Snowden (1985) (described also in the monograph Shao and Tu (1995), Ch. 6.2.4). After resampling the original weights for sampling units are properly rescaled and bootstrap variance estimate of the corresponding indicator is obtained by the usual Monte Carlo approximation based on the independent bootstrap replicates. These results were compared to other 100 independent experiments in which the sample was drawn from the pseudo-population according to the standard LFS procedure, which is thoroughly described in Popi\'nski (2006). Actually, we used a simplified version of the standard procedure used in the LFS, because we did not take into account the fact that the real sample consists of four elementary subsamples together with a rotation scheme. Instead, we considered only one of such four elementary subsamples. The same variable was estimated and the precision was evaluated again through the bootstrap procedure. The means of the result over 100 independent experiments are gathered in Table 1 below. One can easily observe that the proposed new procedure gives an average about 14\% gain in CV, when compared to the standard LFS procedure.

\vspace{5mm}
\begin{table}[htbp]
    \centering
    \begin{tabular}{crrrrrr} \hline
         NUTS2  & standard & standard & optimal & optimal & standard & optimal \\ &  SSU allocation &  PSU allocation &  SSU allocation &  PSU allocation &  CV &  CV \\  \hline
       PL11 & 884 & 130 & 888 & 127 & 10.6 & 9.4 \\
        PL12 & 1170 & 156 & 1033 & 153 & 10.8 & 9.5 \\
        PL21 & 884 & 130 & 861 & 130 & 10.9 & 9.4 \\
        PL22 & 1170 & 182 & 999 & 114 & 9.2 & 9.6 \\
        PL31 & 754 & 104 & 813 & 117 & 11.2 & 9.5 \\
        PL32 & 702 & 104 & 661 & 96 & 10.2 & 9.7 \\
        PL33 & 676 & 78 & 731 & 117 & 12.4 & 9.5 \\
        PL34 & 832 & 78 & 882 & 120 & 11.8 & 9.5 \\
        PL41 & 936 & 156 & 927 & 129 & 10.3 & 9.5 \\
        PL42 & 806 & 104 & 836 & 112 & 11.7 & 9.7 \\
        PL43 & 572 & 78 & 814 & 95 & 12.3 & 9.9 \\
        PL51 & 910 & 130 & 880 & 113 & 10.6 & 9.7 \\
        PL52 & 988 & 156 & 942 & 133 & 10.3 & 9.5 \\
        PL61 & 728 & 104 & 779 & 101 & 10.9 & 9.6 \\
        PL62 & 780 & 78 & 754 & 103 & 11.0 & 9.6 \\
        PL63 & 884 & 104 & 869 & 111 & 11.0 & 9.6 \\
        \hline
         Sum   & 13676 & 1872 & 13669 & 1871 &   &   \\ \hline
    \end{tabular}

    \vspace{2mm}
    Table 1: Comparison of allocations and precision between standard and optimal procedures in the LFS on the basis of numerical experiments for a census-based pseudo-population
\end{table}

\section{Conclusions} The method of eigenvalue and eigenvectors was applied to optimal equal-precision allocation in two-stage sampling for the first time in Niemiro and Wesolowski (2001). In the present paper we emphasize its versatility by considering more abstract setting  covering also single-stage sampling (in Section 2) and wider family of two-stage sampling schemes (with stratification at the second stage). In particular, Hartley-Rao sampling at the first stage is taken care of. Additionally the case of constant SSU's sample sizes within PSU's from the same strata is covered by the proposed general methodology. In general the approach is based on looking for a unique positive eigenvalue of a matrix, which is properly defined in terms of population quantities. This matrix appears to be a low-rank ($\le 2$) perturbation of a diagonal matrix. It is proved that the eigenvector associated with the unique positive eigenvalue of this matrix has all components of the same sign. Both the eigenvalue and the eigenvector can be calculated using standard procedures, see e.g. the R-code we posted at \href{https://github.com/rwieczor/eigenproblem_sample_allocation}{https://github.com/rwieczor/eigenproblem\_sample\_allocation}. After the eigenvector is known the optimal equal-precision  allocation is then derived easily. The numerical example shows that application of the proposed method to Polish LFS improves CV of estimates for subpopulations by 14\% on average, when compared to the standard allocation used at present in this survey.

The allocation procedures and formulas developed above, similarly as the classical ones, depend on population quantities as $S^2_{j,h,i,g}$ which, by rule are unknown, and maybe difficult to estimate e.g. from previous surveys. Then a possible approach would be to adopt some model assumptions and replace $S^2_{j,h,i,g}$'s by their model expectations (as done e.g. in Clark (2009) in a somewhat different setting of the problem, when subpopulations may cut across PSU's). An alternative approach would be to refer to auxiliary variable $\mathcal{X}$, which is correlated with the variable of study and available for all PSU's and/or SSU's in the population from administrative registers and using $S^2_{j,h,i,g}(\mathcal{X})$'s instead of $S^2_{j,h,i,g}$'s.

Finally, we mention that the method, we developed in this paper allows us to incorporate different predesigned subpopulations levels of precision priority $\kappa_j>0$, $j=1,\ldots,J$, as described in Section 3.

\vspace{10mm}\small
{\bf References}

\begin{enumerate}

\item {\sc Bethel, J.} Sample allocation in multivariate surveys. {\em Survey Meth.} {\bf 15} (1989), 47-57.

\item {\sc Choudhry, G.H., Rao, J.N.K., Hidiroglou, M.A.} On sample allocation for efficient domain estimation. {\em Survey Meth.} {\bf 38(1)} (2012), 23-29.

\item {\sc Clark, R.G.} Sampling of subpopulations in two stage surveys. {\em Statist. Med.} {\bf 28(29)} (2009), 3697-3717.

\item {\sc Clark, R.G., Steel, D.G.} Optimum allocation of sample to strata and stages with simple additional constraints. {\em J. Royal Statist. Soc. D} {\bf 49} (2000), 197-207.

\item {\sc Cochran, W.G.} {\em Sampling Techniques}, 3rd ed., Wiley, New York, 1977.

\item {\sc Hartley, H.O., Rao, J.N.K.} Sampling with unequal probabilities and without replacement. {\em Ann. Math. Statist.} {\bf 33} (1962), 350-374.

\item {\sc Horn, R., Johnson, C.R.} {\em Matrix Analysis}, Cambridge, 1985.

\item {\sc Huddlestone, H.F., Claypool, P.L., Hocking, R.R.} Optimum allocation to strata using convex programming. {\em Appl. Statist.} {\bf 19} (1970), 273-278.

\item {\sc Kato, T.} {\em A Short Introduction to Perturbation Theory for Linear Operators.} Springer, New York 1981.

\item {\sc Khan, M.G.M., Chand, M.A., Ahmad, N.} Optimum allocation in two-stage and stratified two-stage sampling for multivariate surveys. {\em Proceedings of the Survey Research Methods Section, ASA} (2006), 3215-3220.

\item {\sc Kozak, M.} Method of multivariate sample allocation in agricultural surveys. {\em Biometr. Colloq.} {\bf 34} (2004), 241-250.

\item {\sc Kozak, M., Zieliński, A.} Sample allocation between domains and strata. {\em Int. J. Appl. Math. Statist.} {\bf 3} (2005), 19-40.

\item {\sc Kozak, M., Zieliński, A., Singh, S.} Stratified two-stage sampling in domains: Sample allocation between domains, strata and sampling stages. {\em Statist. Probab. Lett.} {\bf 78} (2008), 970-974.

\item {\sc Lednicki, B., Weso\l owski, J.} Localization of sample between subpopulations. {\em Wiad. Statyst.} {\bf 39(9)} (1994), 2-4 (in Polish).

\item {\sc Lednicki, B., Wieczorkowski, R.} Optimal stratification and sample allocation between subpopulations and strata. {\em Statist. Trans.} {\bf 6(2)} (2003), 287-305.

\item {\sc \L yso\'n, P., Barlik, M., Brochowicz-Lewi\'nska, A., Jacyków, D., Lewandowska, B., Miros\l aw, J., Siwiak, K., W\k agrowska, U.} {\em Household Budget Survey in 2012}. Central Statistical Office, Warszawa 2013.

\item {\sc McCarthy, P.J., Snowden, C.B.} The bootstrap and finite population sampling. In: {\em Vital and Health Statistics}, Ser. 2, no. 95, Public Health Service Publ. 85-1369, US Gov. Print. Office, Washington 1985.

\item {\sc Niemiro, W., Weso\l owski, J.} Fixed precision allocation in two-stage sampling. {\em
Appl. Math.} {\bf 28} (2001), 73-82.

\item {\sc Popi\'nski, W.} Development of the Polish Labour Force Survey. {\em  Statist. Trans.} {\bf 7(5)} (2006), 1009-1030.

\item {\sc R Core Team} {\em R: A language and environment for statistical computing.} R Foundation for Statistical Computing, Vienna 2013. URL http://www.R-project.org

\item {\sc S\"arndal, C.-E., Swensson, B., Wretman, J.} {\em Model Assisted Survey Sampling}, Springer, New York 1992.

\item {\sc Shao, J., Tu, D.} {\em The Jackknife and Bootstrap}, Springer, New York 1995.

\end{enumerate}
\end{document}